\DeclareMathOperator{\supp}{supp}
\renewcommand\Im{\hbox{{\rm Im}}\,}
\newcommand{\abs}[1]{\lvert#1\rvert}
\newcommand{\Abs}[1]{\left\lvert#1\right\rvert}
\newcommand{\norm}[1]{\lVert#1\rVert}
\newcommand{\Norm}[1]{\left\lVert#1\right\rVert}
\newcommand{\floor}[1]{\lfloor#1\rfloor}
\newcommand{\bbT}{{\mathbb T}}
\newcommand{\bbR}{{\mathbb R}}
\newcommand{\bbC}{{\mathbb C}}
\newcommand{\bbN}{{\mathbb N}}
\newcommand{\bbZ}{{\mathbb Z}}
\newcommand{\bbD}{{\mathbb D}}
\newcommand{\wh}{\widehat}
\newcommand{\wc}{\widecheck}
\newcommand{\ba}{{\mathbf a}}
\newcommand{\bb}{{\mathbf b}}
\newcommand{\bH}{{\mathbf H}}
\newcommand{\bw}{{\mathbf w}}
\newcommand{\bu}{{\mathbf u}}
\newcommand{\f}{{\mathbf f}}
\newcommand{\bpsi}{{\bm{\psi}}}
\newcommand{\bvphi}{{\bm{\varphi}}}
\newcommand{\calF}{\mathcal{F}}
\newcommand{\calP}{\mathcal{P}}
\newcommand{\calB}{\mathcal{B}}
\newcommand{\calR}{\mathcal{R}}
\newcommand{\calE}{\mathcal{E}}
\newcommand{\Sch}{\mathbf{S}}
\DeclareFontFamily{U}{mathx}{\hyphenchar\font45}
\DeclareFontShape{U}{mathx}{m}{n}{<5> <6> <7> <8> <9> <10>
<10.95> <12> <14.4> <17.28> <20.74> <24.88> mathx10}{}
\DeclareSymbolFont{mathx}{U}{mathx}{m}{n}
\DeclareMathAccent{\widecheck}{0}{mathx}{"71}
\numberwithin{equation}{section}
\theoremstyle{plain}
\newtheorem{theorem}{\bf Theorem}[section]
\newtheorem{lemma}[theorem]{\bf Lemma}
\newtheorem{proposition}[theorem]{\bf Proposition}
\theoremstyle{definition}
\theoremstyle{remark}
\newtheorem*{remark*}{\bf Remark}
\newtheorem{remark}[theorem]{\bf Remark}
\newtheorem{example}[theorem]{\bf Example}
\newcommand{\wt}{\widetilde}
\newcommand{\loc}{\mathrm{loc}}
\newcommand{\1}{\mathbbm{1}}
\newcommand{\bh}{{\mathbf{h}}}
\newcommand{\bg}{{\mathbf{g}}}
\renewcommand{\[}{\begin{equation}}
\renewcommand{\]}{\end{equation}}
\begin{document} 
\title{Restriction theorems for Hankel operators}
\date{1 October 2018} 

\author{Nazar Miheisi \and   Alexander Pushnitski}
\address{Department of Mathematics,
King's College London,
Strand, London WC2R 2LS,
United Kingdom}
\email{nazar.miheisi@kcl.ac.uk}
\email{alexander.pushnitski@kcl.ac.uk}

\subjclass[2010]{47B35, 47B10}

\keywords{Hankel operator, integral kernel, Schatten class}

\begin{abstract}
We consider a class of maps from integral Hankel operators
to Hankel matrices, which we call restriction maps. 
In the simplest case, such a map is simply a restriction of the integral kernel
onto integers. 
More generally, it is given by an averaging of the kernel with a sufficiently
regular weight function. 
We study the boundedness of restriction maps with respect to the operator norm and 
the Schatten norms. 
\end{abstract}

\maketitle

\section{Introduction}

\subsection{Hankel operators}
Let $\alpha = \{\alpha(j)\}_{j\ge 0}$
be a sequence of complex numbers. The \emph{Hankel matrix} $H(\alpha)$
is the ``infinite matrix'' $\{\alpha(j+k)\}_{j,k\ge 0}$, considered as a linear operator on $\ell^2(\bbZ_+)$, 
$\bbZ_+=\{0,1,2,\dots\}$, so that
$$
(H(\alpha)x)(k) = \sum_{j\ge 0} \alpha(j+k)x(j), \quad k\geq0,\quad
x=\{x(j)\}_{j\geq0}\in \ell^2(\bbZ_+).
$$
Similarly, for a \emph{kernel function} $\ba\in L^1_\loc(0,\infty)$, the \emph{integral Hankel operator}
on $L^2(0,\infty)$ is defined by the formula
$$
(\bH(\ba)\f)(t) = \int_0^\infty \ba(t+s)\f(s) \,\,ds, \quad t>0,\quad \f\in L^2(0,\infty). 
$$
In order to distinguish between these two classes of operators, we 
use boldface font for objects associated with integral Hankel operators. 

For general background on Hankel operators, see \cite{Nikolski,Peller}. 
In what follows, we will only consider
bounded Hankel matrices and bounded integral Hankel operators.

\subsection{Restrictions}

The purpose of this paper is to examine the linear map, which we call 
\emph{the restriction map}, 
between the set of integral Hankel operators and the set of Hankel matrices. 
To set the scene, let us consider the \emph{pointwise restriction} of integral kernels to integers. 
For a given kernel function $\ba$, define the sequence
\[
\alpha(j):=\ba(j+1), \quad j\geq0.
\label{a1}
\]
Of course, for this operation to make sense, the kernel function $\ba$ has to be 
continuous. Here is our first result; we denote by $\Sch_p$, $0<p<\infty$, the 
standard Schatten class of compact operators (see Section~\ref{sec.b}). 

%%%%%%%%%%%%
\begin{theorem}\label{thm.a1}
%%%%%%%%%%%%
Let $\bH(\ba)\in\Sch_p$ for some $0<p\leq1$. Then the kernel function $\ba(t)$  
is continuous in $t>0$, so the restriction \eqref{a1} is well defined.
The operator $H(\alpha)$ is in $\Sch_p$ and we have the estimate
\[
\norm{H(\alpha)}_{\Sch_p}\leq C_p\norm{\bH(\ba)}_{\Sch_p}. 
\label{a2}
\]
\end{theorem}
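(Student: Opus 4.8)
The plan is to route the problem through the Peller-type descriptions of the two Schatten classes, reducing Theorem~\ref{thm.a1} to a sampling inequality between a continuous and a discrete Besov space. Recall that $\bH(\ba)\in\Sch_p$ is equivalent to $\ba$ lying in a Besov space $\mathcal{B}=B^{1/p}_{p,p}$ of functions on $(0,\infty)$, with two-sided control of the norms, and that $H(\gamma)\in\Sch_p$ is equivalent to $\{\gamma(m)\}_{m\ge0}$ lying in the corresponding discrete Besov space $b^{1/p}_{p,p}$, again with equivalent norms. In the borderline regime $s=1/p$, $q=p\le1$ one has the embedding $B^{1/p}_{p,p}\hookrightarrow C(0,\infty)$; this is exactly where the hypothesis $p\le 1$ enters, and it simultaneously yields the continuity of $\ba$ and the well-definedness of the restriction \eqref{a1}. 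Granting these identifications, the estimate \eqref{a2} becomes the statement that restriction to the lattice $\{m+1:m\ge0\}$ maps $\mathcal{B}$ boundedly into $b^{1/p}_{p,p}$.

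To see concretely why the integral norm dominates a discrete one, I would first record the cleanest mechanism linking the two settings. Let $\chi=\1_{(0,1)}$ and define the isometry $J\colon\ell^2(\bbZ_+)\to L^2(0,\infty)$ by $(Jx)(t)=\sum_{j\ge0}x(j)\chi(t-j)$. A direct computation gives $J^*\bH(\ba)J=H(\beta)$, where $\beta(m)=\int_0^1\!\int_0^1\ba(m+u+v)\,du\,dv=\int_0^2\ba(m+w)\Lambda(w)\,dw$ and $\Lambda$ is the tent weight supported on $(0,2)$ and peaked at $w=1$. Since $J$ is an isometry, $\norm{H(\beta)}_{\Sch_p}\le\norm{\bH(\ba)}_{\Sch_p}$. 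Thus the integral $\Sch_p$ norm already controls the Hankel matrix built from the \emph{averaged} restriction $\beta$, which approximates the pointwise restriction $\alpha(m)=\ba(m+1)$.

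The passage from $\beta$ to $\alpha$ is where the regularity must be used in earnest. I would decompose $\ba$ into Littlewood--Paley pieces $\ba=\sum_{n}\ba_n$, each frequency-localized to $\abs{\xi}\sim2^{n}$, so that $\ba_n$ extends to an entire function of exponential type $\sim2^{n}$ and $\sum_n 2^{n}\norm{\ba_n}_{L^p}^p\approx\norm{\bH(\ba)}_{\Sch_p}^p$. For a single band the Plancherel--P\'olya inequalities make pointwise sampling at the integers stable and quantify the aliasing error $\alpha_n-\beta_n$ in terms of $2^{n}\norm{\ba_n}_{L^p}^p$; summing over $n$ via the quasi-triangle inequality $\norm{\sum_nA_n}_{\Sch_p}^p\le\sum_n\norm{A_n}_{\Sch_p}^p$, valid precisely for $p\le1$, recombines these into the bound \eqref{a2}. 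This subadditivity is the second, and decisive, place where $p\le1$ is used.

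The main obstacle, as the above makes clear, is that pointwise sampling is a \emph{singular} operation: it cannot be realized by any uniformly bounded sandwich $S\,\bH(\ba)\,T$, since forcing the averaging kernel $\chi*\chi$ to concentrate at a point drives its mass, and hence the resulting matrix, to zero. Consequently the correction $\alpha-\beta$ must be estimated directly, and the real work is to prove that each banded aliasing term contributes $\Sch_p$-norm controlled by $\bH(\ba)$ \emph{with constants independent of the scale $2^n$ and of $m$}. Securing this scale-uniformity --- equivalently, the boundedness of the lattice restriction $\mathcal{B}\to b^{1/p}_{p,p}$ --- is the technical heart of the argument; once it is in place, the summation over scales and the final estimate follow mechanically.
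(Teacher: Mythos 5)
Your skeleton matches the paper's strategy in outline --- reduce via the Peller-type characterization (Proposition~\ref{thm.peller}) to a comparison of dyadic Besov blocks, using $p\le1$ once for the pointwise control of $\ba$ and once for subadditivity --- but the proposal has a genuine gap: the decisive estimate, namely that each discrete dyadic block is dominated by the corresponding continuous block \emph{uniformly in the scale}, is only announced, never proved. You yourself flag it as ``the technical heart''; but that scale-uniform sampling/aliasing bound \emph{is} the theorem, and everything else in your write-up (the embedding into continuous functions, the quasi-triangle inequality, the final summation) is routine. Moreover the detour through the isometry $J$ and the tent-averaged sequence $\beta$ buys nothing: for a block at scale $2^n\gg1$ the average $\beta_n$ is negligible (the tent kernel kills high frequencies), so the ``correction'' $\alpha_n-\beta_n$ is essentially all of $\alpha_n$ and you are back to the unproved estimate. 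There is also a mismatch in your decomposition: you band-limit $\ba$ in its frequency variable $\xi$, whereas Peller's characterization localizes the kernel variable $t$ (equivalently, the frequency of the symbol $\wc\ba$), so your blocks $\ba_n$ do not match the blocks appearing in either side of Proposition~\ref{thm.peller}, and the claimed equivalence $\sum_n 2^n\norm{\ba_n}_{L^p}^p\asymp\norm{\bH(\ba)}_{\Sch_p}^p$ is not Peller's formula.

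The paper closes exactly this gap with a one-line device, the periodization operator of Section~\ref{sec.b3}: for $0<p\le1$ one has $\norm{\calP\f}_{L^p(\bbT)}\le\norm{\f}_{L^p(\bbR)}$ (apply $\abs{a+b}^p\le\abs{a}^p+\abs{b}^p$ pointwise in \eqref{b3.1} and integrate), together with $\wh{\calP\f}(j)=\wh\f(j)$. Applying this with $\f=\boldsymbol\calF^{-1}(\bb\bw_m)$, where $\bb(t)=\ba(t+1)$, gives
\[
\Norm{\sum_{j\geq0}\bb(j)w_m(j)z^j}_{L^p(\bbT)}\le\norm{\boldsymbol\calF^{-1}(\bb\bw_m)}_{L^p(\bbR)}
\]
with constant one, uniformly in $m$ --- precisely your missing scale-uniform aliasing bound \eqref{b3.3}. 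Summing with weights $2^m$ and using Proposition~\ref{thm.peller} in both directions, together with $\norm{\bH(\bb)}_{\Sch_p}\le\norm{\bH(\ba)}_{\Sch_p}$ (since $\bH(\bb)$ is unitarily equivalent to the restriction of $\bH(\ba)$ to $L^2(1/2,\infty)$, as in \eqref{c6}) and the bound \eqref{cc1} for the $\alpha(0)$ term, finishes the proof. Note in particular that the paper never needs operator-level $p$-subadditivity over bands: the hypothesis $p\le1$ is spent entirely inside the scalar periodization inequality. So your plan is fillable --- and by a much shorter argument than you anticipate --- but as it stands the proof is incomplete at exactly the step you identified as the main obstacle.
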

The continuity of the kernel function $\ba$ for trace class integral Hankel operators
is well known (see e.g. \cite[Corollary 7.10]{Partington}); the main point here
is the estimate \eqref{a2}. 
In Section~\ref{sec.c} we give a slightly more precise version of Theorem~\ref{thm.a1} and 
show that it does not extend to $p>1$. Further, we show that if 
we restrict the map $\bH(\ba)\mapsto H(\alpha)$ to \emph{non-negative} integral 
Hankel operators, then it is bounded in $\Sch_p$ norm for all $0<p<\infty$
(and also in the operator norm). 

Further, along with the pointwise restriction \eqref{a1}, we consider 
the following \emph{restrictions by averaging}. 
For a suitably regular function $\bvphi$ on $\bbR$ and for 
a kernel function $\ba$, 
we define the restriction $\calR_\bvphi\ba$  to be the sequence
$$
\calR_\bvphi \ba(j)=\int_0^\infty \ba(t)\bvphi(t-j)\,\,dt, \quad  j\geq0.
$$
In particular, formally taking $\bvphi(t)=\delta(t-1)$, where $\delta$ is the 
Dirac $\delta$-function, we recover the pointwise restrictions \eqref{a1}.  
In Section~\ref{sec.d} we prove that, under suitable regularity conditions on $\bvphi$, 
the map 
$$
\bH(\ba)\mapsto H(\calR_\bvphi\ba)
$$
is bounded in $\Sch_p$ norm for all $0<p<\infty$
(and also in the operator norm). 
We also relate this result to the well known unitary equivalence between Hankel matrices
and integral Hankel operators.

This paper appeared as an attempt to consider one of the technical ingredients
of \cite{MP} on a more systematic basis. 
Theorem~\ref{thm.a1} and its proof is based on the same set of ideas as \cite[Theorem 3.2]{MP}. 

The results of this paper seem to parallel some restriction theorems
for Fourier multipliers; see e.g. \cite{de Leeuw,Igari,Coifman-weiss}. 
However, this connection is not completely understood (at least by the authors).

We note in passing that one can consider a converse operation, 
an \emph{extension} of a Hankel matrix to an integral kernel. 
For a suitably regular function $\bvphi$ and 
a sequence $\alpha=\{\alpha(j)\}_{j\geq0}$, one can define the 
\emph{extension} $\calE_{\bvphi}\alpha$ to be the function
$$
\calE_{\bvphi}\alpha(t)=\sum_{k\geq0}\alpha(k)\bvphi(t-k), \quad t>0,
$$
and one can consider the map
$$
H(\alpha)\mapsto \bH(\calE_{\bvphi}\alpha). 
$$
Although some Schatten norm boundedness results for this map are not
difficult to prove, we have not succeeded in finding a coherent set of estimates
for it and therefore we do not discuss extensions here.

%%%%%%%%%%%%%%%%%%%%%%%
\subsection{Symbols}
%%%%%%%%%%%%%%%%%%%%%%%
For a bounded Hankel matrix $H(\alpha)$, its analytic symbol is the function 
$$
\wc \alpha(z)=\sum_{m\geq0} \alpha_m z^m, 
\quad \abs{z}<1.
$$
Similarly, for a bounded integral Hankel operator $\bH(\ba)$, its analytic symbol
is the function 
$$
\wc\ba(\xi)=\int_0^\infty \ba(t)e^{2\pi i t\xi}dt, \quad \Im\xi>0.
$$
It is instructive to view restriction maps on Hankel operators
in terms of the symbols. 
If $\alpha=\calR_\bvphi\ba$, then for the symbols we have
\begin{equation}
\wc\alpha(z)=\int_\bbR \frac{\wc\ba(\xi+i0)\wc\bvphi(-\xi+i0)}{1-ze^{-2\pi i\xi}}d\xi, \quad \abs{z}<1.
\label{r}
\end{equation}
In particular, for the pointwise restriction \eqref{a1} we have
\[
\wc\alpha(e^{2\pi i\xi})=e^{-2\pi i\xi}\sum_{j\in\bbZ}\wc \ba(\xi-j), \quad \Im \xi>0.
\label{r1}
\]
Since Schatten norms of Hankel operators correspond to Besov norms of the symbols
(see Section~\ref{sec.b}), 
one can view the topic of this paper as the study of the map induced by \eqref{r} between Besov classes. 
We prefer to use an operator theoretic viewpoint whenever possible, although sometimes we 
have to resort to proofs in terms of Besov classes. 

%%%%%%%%%%%%%%%%%%%%%%%%%%%%%%%%%%%%%
%%%%%%%%%%%%%%%%%%%%%%%%%%%%%%%%%%%%%
\section{Preliminaries}\label{sec.b}
%%%%%%%%%%%%%%%%%%%%%%%%%%%%%%%%%%%%%
%%%%%%%%%%%%%%%%%%%%%%%%%%%%%%%%%%%%%

Throughout this paper, the symbol `$C$' with a (possibly empty) set of subscripts
will denote a positive constant, depending only on the subscripts, whose precise
value may change with each occurrence. Moreover, we write $X\asymp Y$ for two
expressions $X$ and $Y$ if $X\le C Y$ and $Y\le C X$.

\subsection{Operator theory, Schatten classes}
For a bounded linear  operator $A$ in a Hilbert space, we denote by $\norm{A}_{\calB}$ the operator norm of $A$. 

For a compact operator $A$ in a Hilbert space, let $\{s_n(A)\}_{n=1}^\infty$ 
be the sequence of singular values of $A$, enumerated with multiplicities 
taken into account. For $0<p<\infty$, the standard Schatten class $\Sch_p$ 
of compact operators is defined by the condition 
$$
A\in \Sch_p 
\quad\Leftrightarrow\quad
\norm{A}_{\Sch_p}^p:=\sum_{n\geq1} s_n(A)^p<\infty.
$$
$\norm{\cdot}_{\Sch_p}$ is a norm on $\Sch_p$ for $p\geq1$ and a
quasinorm for $0<p<1$.

%%%%%%%%%%%%%%%%%%%%%%%%%%%%%%%%%%%
\subsection{Characterisation of Schatten class Hankel operators}\label{sec.b2}
%%%%%%%%%%%%%%%%%%%%%%%%%%%%%%%%%%%

Let $\bbT$ deonte the unit circle. We consider the Fourier transform $\calF$ as
the unitary map from  $L^2(\bbT)$ to $\ell^2(\bbZ)$, 
$$
(\calF f)(j)=\wh f(j)=
\int_0^1 f(e^{2\pi it})e^{-2\pi ijt} \,dt, \quad j\in\bbZ.
$$
We also use its inverse $\calF^{-1}: \ell^2(\bbZ)\to L^2(\bbT)$ and denote $\wc \alpha=\calF^{-1}\alpha$. 
Similarly, we use the Fourier integral transform $\boldsymbol\calF$ in $L^2(\bbR)$ and its inverse
$$
(\boldsymbol\calF^{-1}\f)(\xi)=\wc\f(\xi)=\int_{\bbR} \f(t)e^{2\pi i\xi t}\,dt,
\quad \xi\in\bbR.
$$

Let $\bw\in C^\infty(\bbR)$ be a non-negative function such that $\supp \bw\subset [1/2,2]$ 
and 
$$
\sum_{m\in\bbZ} \bw(t/2^m)=1, \quad t>0.
$$
We set $\bw_m(t)=\bw(t/2^m)$. For $m\geq0$, we denote by $w_m$ the restriction of the function $\bw_m$ onto 
$\bbZ_+$, i.e. $w_m(j)=\bw_m(j)$, $j\geq0$. 
%%%%%%%%%%%%%%%%%%%%%%%
\begin{proposition}\cite[Theorem 6.7.4]{Peller}\label{thm.peller}
%%%%%%%%%%%%%%%%%%%%%%%
Let $0<p<\infty$. 
\begin{enumerate}[\rm (i)]
\item
For a bounded Hankel matrix $H(\alpha)$, one has
$$
\norm{H(\alpha)}_{\Sch_p}^p
\asymp
\abs{\alpha(0)}^p +\sum_{m\geq0} 2^m 
\norm{\calF^{-1}(\alpha w_m)}_{L^p(\bbT)}^p. 
$$
\item
For a bounded integral Hankel operator $\bH(\ba)$ one has
$$
\norm{\bH(\ba)}_{\Sch_p}^p 
\asymp
\sum_{m\in \bbZ}2^m \norm{\boldsymbol\calF^{-1}(\ba \bw_m)}_{L^p(\bbR)}^p. 
$$
\end{enumerate}
\end{proposition}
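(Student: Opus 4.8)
The plan is to recognise both right-hand sides as Besov (quasi-)seminorms of the analytic symbols, and then to deduce the proposition from the Besov-space description of Schatten class Hankel operators together with the Littlewood--Paley characterisation of the Besov (quasi-)norm. Throughout, the critical exponent $1/p$ and the weight $2^m$ are tied together by the identity $2^{m}=2^{m\cdot(1/p)\cdot p}$, which is the signature of the Besov index $s=1/p$ with summation parameter equal to $p$.

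For part (i) I would start from Peller's theorem in its qualitative form: a bounded Hankel matrix $H(\alpha)$ lies in $\Sch_p$ if and only if its analytic symbol $\wc\alpha$ belongs to the inhomogeneous Besov class $B_p^{1/p}(\bbT)$, and then $\norm{H(\alpha)}_{\Sch_p}\asymp\norm{\wc\alpha}_{B_p^{1/p}(\bbT)}$. The next step is to express this Besov quasinorm through a Littlewood--Paley decomposition on the Fourier side. Because $\supp\bw\subset[1/2,2]$, the family $\{w_m\}_{m\ge0}$ is, up to the lowest scales, a dyadic partition of unity on the frequencies $j\ge1$: for each integer $j\ge1$ only finitely many $w_m(j)$ are nonzero and they sum to $1$, whereas the index $j=0$ satisfies $w_m(0)=\bw(0)=0$ for every $m$ and is therefore not captured at all. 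Consequently the $m$-th Littlewood--Paley block of $\wc\alpha$ is exactly $\calF^{-1}(\alpha w_m)$; weighting its $p$-th power by $2^{m(1/p)p}=2^m$ reproduces the sum $\sum_{m\ge0}2^m\norm{\calF^{-1}(\alpha w_m)}_{L^p(\bbT)}^p$, while the uncaptured zeroth mode contributes the separate term $\abs{\alpha(0)}^p$. Combining these identities with Peller's equivalence gives (i).

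Part (ii) is handled in the same way on the line, now in the homogeneous picture. One invokes the integral analogue of Peller's theorem, $\norm{\bH(\ba)}_{\Sch_p}\asymp\norm{\wc\ba}_{\dB_p^{1/p}(\bbR)}$, and writes the homogeneous Besov quasinorm through the dyadic family $\{\bw_m\}_{m\in\bbZ}$, which by construction satisfies $\sum_{m\in\bbZ}\bw_m(t)=1$ for all $t>0$. The $m$-th block is now $\boldsymbol\calF^{-1}(\ba\bw_m)$, and since the dyadic scales cover the whole half-line $(0,\infty)$ symmetrically there is no exceptional low-frequency atom; the full sum runs over $m\in\bbZ$ with no extra term, which is precisely the right-hand side of (ii).

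The genuine depth of the statement is entirely contained in Peller's theorem, identifying Schatten membership of a Hankel operator with the Besov regularity of its symbol; once this is taken as known (as the citation allows), the remainder is routine Littlewood--Paley bookkeeping. I expect real care to be needed at two points. First, one must confirm that the specific dyadic system $\{\bw_m\}$ (smooth, supported in $[1/2,2]$, generating a partition of unity) is admissible, so that the resulting quasinorm is equivalent to the standard one; this is standard but must be verified, because the Besov (quasi-)norm is only defined up to equivalence of the underlying partition. Second, in the range $0<p<1$ these are quasinorms and one works with $\ell^p$-type sums of $p$-th powers rather than genuine norms, so the passage between the operator side and the symbol side must be carried out at the level of the $p$-summable sequences of block quasinorms; this is exactly where the low-frequency discrepancy in (i), producing the isolated term $\abs{\alpha(0)}^p$, has to be accounted for precisely.
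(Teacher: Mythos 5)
Your proposal is correct and coincides with the paper's own treatment: the paper gives no independent proof of this proposition, quoting it directly from Peller \cite[Theorem 6.7.4]{Peller} and remarking only that the right-hand sides are exactly the $B_p^{1/p}$ Besov norms of the symbols, which is precisely the identification (including the uncaptured zeroth mode $\abs{\alpha(0)}^p$ in the inhomogeneous circle case versus the full homogeneous sum over $m\in\bbZ$ on the line) that your Littlewood--Paley bookkeeping carries out.
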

The expressions in the right side here are exactly the norms of the symbols in
the Besov class $B_p^{1/p}$.

%%%%%%%%%%%%%%%%%%%%%%%%%%%

\subsection{Periodization operator}\label{sec.b3}
Here we discuss the map induced by \eqref{r1}. 
For a compactly supported function $\f\in C(\bbR)$, we define the \emph{periodization} of $\f$ as the
function on the unit circle given by 
\[
\calP \f(e^{2\pi it}) = \sum_{j\in\bbZ} \f(t - j),
\quad e^{2\pi it} \in\bbT.
\label{b3.1}
\]
We call $\calP$ the \emph{periodization operator}. Applying the ``triangle
inequality" $\abs{a+b}^p\leq \abs{a}^p+\abs{b}^p$ for $0<p\leq1$ to
\eqref{b3.1} and then integrating over $t$ we see that
$$
\norm{\calP\f}_{L^p(\bbT)} \le \norm{\f}_{L^p(\bbR)},
\quad 0<p\le 1.
$$
This allows one to extend $\calP$ to a map from $L^1(\bbR)$ to $L^1(\bbT)$. 
For $\f\in L^1(\bbR)$  it is straightforward to see that
$$
\wh{\calP\f}(j) = \wh\f(j), \quad j\in \bbZ.
$$
Thus we have the estimate
\[
\Norm{\sum_{j\in\bbZ}\wh \f(j)z^j}_{L^p(\bbT)}
\le \norm{\f}_{L^p(\bbR)}, \quad 0<p\le 1.
\label{b3.3}
\]

%%%%%%%%%%%%%%%%%%%%%%%%%%%%%%%%%%%%%%%
%%%%%%%%%%%%%%%%%%%%%%%%%%%%%%%%%%%%%%%
\section{Pointwise restrictions}\label{sec.c}
%%%%%%%%%%%%%%%%%%%%%%%%%%%%%%%%%%%%%%%
%%%%%%%%%%%%%%%%%%%%%%%%%%%%%%%%%%%%%%%

\subsection{Pointwise restrictions for operators of class $\Sch_p$}\label{sec.c.1}

For $\lambda>0$, let $\delta_\lambda(t)=\delta(t-\lambda)$, where $\delta(t)$ is the
Dirac delta function, so that if $\ba\in C(0,\infty)$, then
$$
(\calR_{\delta_\lambda}\ba)(j) = \ba(j+\lambda), \quad j\ge 0. 
$$
If $\ba$ is the kernel function of an integral Hankel operator
of class $\Sch_1$, then $\ba$ is almost everywhere equal to a continuous
function on $(0,\infty)$ \cite[Corollary 7.10]{Partington}, and the estimate
\begin{equation}
\abs{\ba(t)}\leq C\norm{\bH(\ba)}_{\Sch_1}/t, \quad t>0,
\label{cc1}
\end{equation}
holds true with some absolute constant $C$. 
Thus, the
definition of $\calR_{\delta_\lambda}\ba$ makes sense without any further
restriction on $\ba$. 

The aim of this section is to prove the following.
%%%%%%%%%%%%%%%%%%%%
\begin{theorem}\label{pointwise}
%%%%%%%%%%%%%%%%%%%%
Let $0<p\leq1$, $\lambda>0$. If $\bH(\ba)\in\Sch_p$ then $H(\calR_{\delta_\lambda}\ba)\in\Sch_p$
and 
\[
\norm{H(\calR_{\delta_\lambda}\ba)}_{\Sch_p}
\leq 
C_p(1+1/\lambda)
\norm{\bH(\ba)}_{\Sch_p}.
\label{pointwise1}
\]
\end{theorem}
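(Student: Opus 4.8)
The plan is to run everything through the two Besov-type characterisations of Proposition~\ref{thm.peller} together with the periodization estimate \eqref{b3.3}, after first removing the shift by $\lambda$ at the level of operators. Write $\bb(t):=\ba(t+\lambda)$, which is continuous on $(0,\infty)$, so that $\calR_{\delta_\lambda}\ba=\alpha$ with $\alpha(j)=\bb(j)$, $j\ge0$. First I would dispose of $\lambda$ by a factorisation: let $R_\lambda$ be the isometry on $L^2(0,\infty)$ given by $(R_\lambda\f)(t)=\f(t-\lambda)$ for $t>\lambda$ and $(R_\lambda\f)(t)=0$ for $0<t\le\lambda$. A change of variables $u=s-\lambda$ in the kernel integral gives $\bH(\ba)R_\lambda=\bH(\bb)$, and since $R_\lambda$ is a contraction and $\Sch_p$ is a quasi-normed operator ideal,
\[
\norm{\bH(\bb)}_{\Sch_p}=\norm{\bH(\ba)R_\lambda}_{\Sch_p}\le\norm{\bH(\ba)}_{\Sch_p}\norm{R_\lambda}_{\calB}\le\norm{\bH(\ba)}_{\Sch_p}.
\]
Thus it suffices to control $\norm{H(\alpha)}_{\Sch_p}$ by $\norm{\bH(\bb)}_{\Sch_p}$, the shift having been converted into the harmless replacement of $\ba$ by $\bb$.

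Next, fix $m\ge0$, set $\bg_m:=\bb\bw_m$ (continuous, supported in $[2^{m-1},2^{m+1}]$), and put $\f_m:=\boldsymbol\calF^{-1}\bg_m$. By Fourier inversion the integer samples of $\bg_m$ are exactly the Fourier coefficients of $\f_m$, that is $\wh{\f_m}(j)=\bg_m(j)=\alpha(j)w_m(j)$ for all $j\in\bbZ$ (note $\bg_m$ vanishes at the non-positive integers, since $\bw_m(0)=0$ for $m\ge0$). Hence $\sum_{j\in\bbZ}\wh{\f_m}(j)z^j=\calF^{-1}(\alpha w_m)$, and \eqref{b3.3} applied to $\f_m$ will give the per-block bound
\[
\norm{\calF^{-1}(\alpha w_m)}_{L^p(\bbT)}\le\norm{\f_m}_{L^p(\bbR)}=\norm{\boldsymbol\calF^{-1}(\bb\bw_m)}_{L^p(\bbR)},\qquad m\ge0.
\]
To legitimately invoke \eqref{b3.3} one must first check $\f_m\in L^1(\bbR)$. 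For $p=1$ this is immediate, since $\norm{\f_m}_{L^1}=\norm{\boldsymbol\calF^{-1}(\bb\bw_m)}_{L^1}$ is a single dyadic term of the finite sum in Proposition~\ref{thm.peller}(ii). For $p<1$ I would note that $\f_m$ is bounded (because $\bg_m\in L^1$) and lies in $L^p(\bbR)$ (again a finite Besov block), whence $\int\abs{\f_m}\le\norm{\f_m}_\infty^{1-p}\int\abs{\f_m}^p<\infty$.

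Finally I would assemble the pieces. By Proposition~\ref{thm.peller}(i), the per-block bound above, and then Proposition~\ref{thm.peller}(ii) for $\bb$ (dropping the negative $m$ only enlarges the sum),
\[
\norm{H(\alpha)}_{\Sch_p}^p\le C_p\abs{\alpha(0)}^p+C_p\sum_{m\ge0}2^m\norm{\boldsymbol\calF^{-1}(\bb\bw_m)}_{L^p(\bbR)}^p\le C_p\abs{\alpha(0)}^p+C_p\norm{\bH(\bb)}_{\Sch_p}^p.
\]
The exceptional term is $\alpha(0)=\bb(0)=\ba(\lambda)$; since $p\le1$ gives $\norm{\bH(\ba)}_{\Sch_1}\le\norm{\bH(\ba)}_{\Sch_p}$, the decay estimate \eqref{cc1} yields $\abs{\alpha(0)}^p\le C\lambda^{-p}\norm{\bH(\ba)}_{\Sch_p}^p$. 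Combining with the previous display and the elementary inequality $(x^p+y^p)^{1/p}\le C_p(x+y)$ for $0<p\le1$ (applied with $x=\lambda^{-1}$, $y=1$) produces \eqref{pointwise1}. The only genuinely delicate point in the whole argument is the verification that each $\f_m$ belongs to $L^1(\bbR)$, which is precisely what makes the periodization estimate \eqref{b3.3} applicable block by block; once that is in place, the rest is bookkeeping between the two characterisations of Proposition~\ref{thm.peller}.
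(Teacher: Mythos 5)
Your proposal is correct and follows essentially the same route as the paper: Proposition~\ref{thm.peller}(i) to pass to dyadic blocks, the periodization estimate \eqref{b3.3} applied to $\boldsymbol\calF^{-1}(\bb\bw_m)$ block by block, Proposition~\ref{thm.peller}(ii) to reassemble, and \eqref{cc1} for the exceptional term $\alpha(0)=\ba(\lambda)$. The only (harmless) deviations are that you bound $\norm{\bH(\bb)}_{\Sch_p}\le\norm{\bH(\ba)}_{\Sch_p}$ via the one-sided factorization $\bH(\bb)=\bH(\ba)R_\lambda$ with a shift isometry and the ideal property, where the paper instead identifies $\bH(\bb)$ with the compression of $\bH(\ba)$ onto $L^2(\lambda/2,\infty)$, and that you explicitly verify $\boldsymbol\calF^{-1}(\bb\bw_m)\in L^1(\bbR)$ before invoking \eqref{b3.3}, a point the paper leaves implicit.
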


The main component in the proof of Theorem \ref{pointwise} is the estimate
\eqref{b3.3}.

\begin{proof}
Denote $\bb(t)=\ba(t+\lambda)$. 
By Proposition~\ref{thm.peller}(i), we have
\[
\norm{H(\calR_{\delta_\lambda}\ba)}_{\Sch_p}^p 
\leq 
C_p \abs{\bb(0)}^p+C_p\sum_{m\geq0} 2^m \Norm{\sum_{j\geq0}\bb(j)w_m(j)z^j}_{L^p(\bbT)}^p.
\label{c0}
\]
Let us first estimate the series in the right hand side of \eqref{c0}. 
Applying \eqref{b3.3} to $\f=\boldsymbol\calF^{-1}(\bb \bw_m)$, we obtain 
$$
\Norm{\sum_{j\geq0}\bb(j)w_m(j)z^j}_{L^p(\bbT)}^p
\leq 
\norm{\boldsymbol\calF^{-1}(\bb \bw_m)}_{L^p(\bbR)}^p
$$
for every $m\geq0$. 
By Proposition~\ref{thm.peller}(ii), this yields
$$
\sum_{m\geq0} 2^m \Norm{\sum_{j\geq0}\bb(j)w_m(j)z^j}_{L^p(\bbT)}^p
\leq
\sum_{m\in\bbZ} 2^m \norm{\boldsymbol\calF^{-1}(\bb \bw_m)}_{L^p(\bbR)}^p
\leq
C_p \norm{\bH(\bb)}_{\Sch_p}^p. 
$$
Let us relate the norm of $\bH(\bb)$ to the norm of $\bH(\ba)$. 
Writing 
$$
\int_0^\infty \int_0^\infty \bb(t+s)\f(t)\overline{\bg(s)}dt\, ds
=
\int_{\lambda/2}^\infty \int_{\lambda/2}^\infty 
\ba(t+s)\f(t-\lambda/2)\overline{\bg(s-\lambda/2)}dt\, ds, 
$$
we see that $\bH(\bb)$ is unitarily equivalent to the restriction of $\bH(\ba)$ onto 
the subspace $L^2(\lambda/2,\infty)\subset L^2(0,\infty)$.
It follows that 
\[
\norm{\bH(\bb)}_{\Sch_p}\leq \norm{\bH(\ba)}_{\Sch_p}
\label{c6}
\]
for all $p>0$. 
Finally, consider the first term in the right hand side of \eqref{c0}. 
By \eqref{cc1} we have
$$
\abs{\bb(0)}=
\abs{\ba(\lambda)}\leq C\norm{\bH(\ba)}_{\Sch_1}/\lambda\leq C\norm{\bH(\ba)}_{\Sch_p}/\lambda.
$$
Combining the above estimates, we arrive at the required statement. 
\end{proof}

\begin{remark*}
One can also consider restrictions of $\ba$ to the scaled lattice $\{\gamma j+\lambda\}_{j\geq0}$
for some $\gamma>0$. 
For $\gamma>0$, let $\ba_\gamma(t) = \ba(\gamma t)$ and let $V_\gamma:L^2(0,\infty)
\to L^2(0,\infty)$ be the unitary operator
$$
V_\gamma \f(t) = \sqrt{\gamma} \f(\gamma t), \quad t>0.
$$
Then $\gamma\bH(\ba_\gamma) = V_\gamma \bH(\ba) V_\gamma^*$ and so
$\gamma\norm{\bH(\ba_\gamma)}_{\Sch_p} = \norm{\bH(\ba)}_{\Sch_p}$ for all $0<p<\infty$.
It follows from this and Theorem \ref{pointwise} that if $0<p\le 1$, then
$\gamma\norm{H(\calR_{\delta_\lambda}\ba_\gamma)}_{\Sch_p}
\le C_{p} \norm{\bH(\ba)}_{\Sch_p}$, and thus
\[
\sup_{\gamma>0}\gamma\norm{H(\calR_{\delta_\lambda}\ba_\gamma)}_{\Sch_p}
\le C_{p} \norm{\bH(\ba)}_{\Sch_p}.
\label{c11}
\]
\end{remark*}

\subsection{Counterexample for $p>1$}

For $p>1$, is it no longer the case that the kernel of an integral
Hankel operator of class $\Sch_p$ is necessarily continuous. However,
even if we restrict to operators with continuous kernels, the conclusions
of Theorem~\ref{pointwise} still fail and thus the condition $0<p\le 1$
is sharp.

To show this, we fix a smooth kernel function $\ba$ with $\supp \ba\subset [1/2,2]$ and $\ba(1)=1$
and let $\ba^{(N)}(t)=\ba(1+N(t-1))$ for $N\in\bbN$. 
Then for each $N$ we have
\begin{align*}
\calR_{\delta_1}\ba^{(N)}(0)&=\ba^{(N)}(1)=1,
\\
\calR_{\delta_1}\ba^{(N)}(j)&=\ba^{(N)}(1+j)=0,\quad j\geq1.
\end{align*}
It follows that
$$
\norm{H(\calR_{\delta_1}\ba^{(N)})}_{\Sch_p}=1
$$
for all $p\geq 1$ and $N\in\bbN$. 
On the other hand, it is not difficult to show that 
$$
\norm{\bH(\ba^{(N)})}_{\Sch_p}^p\leq CN^{1-p}
$$
which tends to zero as $N\to\infty$ whenever $p>1$. 
Indeed, by the assumption on the support of $\ba$ we have
$$
\ba^{(N)}=\ba^{(N)} \bw_{-1}+\ba^{(N)} \bw_{0}+\ba^{(N)} \bw_{1}
$$
for all $N$, where $\bw_m$ are defined in Section \ref{sec.b2}. 
It is easy to conclude that
$$
\sum_{m=-1,0,1}2^m\norm{\boldsymbol\calF^{-1}(\ba^{(N)}\bw_m)}_{L^p(\bbR)}^p
\leq
C_p\norm{\boldsymbol\calF^{-1}(\ba^{(N)})}_{L^p(\bbR)}^p=CN^{1-p}.
$$
%%%%%%%%%%%%%%%%%%%%%%%%%%%%
\subsection{Partial converse of Theorem~\ref{pointwise}}\label{sec.c.2}
%%%%%%%%%%%%%%%%%%%%%%%%%%%%
It is clear that one cannot bound  $\bH(\ba)$ by $H(\calR_{\delta_\lambda}\ba)$ in any norm. 
However, one can achieve a partial converse if we vary our restriction operators
in an appropriate sense and take a supremum over all restrictions in the right side. 
Here we briefly sketch a sample argument of this nature. 
Fix $0<p\leq1$; 
we use ``continuous'' counterparts of the expressions in Proposition~\ref{thm.peller}; 
see e.g. \cite[Section 2.3.3, p.99]{Triebel2}:
\begin{align}
\norm{H(\alpha)}_{\Sch_p}^p
&\asymp
\abs{\alpha(0)}^p +\int_0^2 
\norm{\calF^{-1}(\alpha w^\tau)}_{L^p(\bbT)}^p\,
\frac{d\tau}{\tau^2}, 
\label{d.1}
\\
\norm{\bH(\ba)}_{\Sch_p}^p 
&\asymp
\int_0^\infty \norm{\boldsymbol\calF^{-1}(\ba \bw^\tau)}_{L^p(\bbR)}^p
\frac{d\tau}{\tau^2}, 
\notag
\end{align}
where $\bw^\tau(t)=\bw(\tau t)$ and $w^\tau=\{\bw(\tau j)\}_{j\ge 0}$.

Let $\ba$ be a continuous function on $(0,\infty)$ and, for $\gamma>0$,
let $\ba_\gamma(t) = \ba(\gamma t)$. Observe that, by a change of variable, 
\begin{align*}
\norm{\calF^{-1}(w^\tau\calR_{\delta_\lambda}\ba_\gamma)}_{L^p(\bbT)}^p
&=
\int_{-1/2}^{1/2}\left|
\sum_{j\ge 0} \bw(\tau j)\ba(\gamma (j+\lambda))
e^{2\pi ij s} \right|^p\,ds
\\
&=
\gamma^{1-p}\int_{-1/2\gamma}^{1/2\gamma}\left|
\gamma\sum_{j\ge 0} \bw(\tau j)\ba(\gamma (j+\lambda))
e^{2\pi ij\gamma s} \right|^p\,ds.
\end{align*}
By another change of variable, 
it then follows from \eqref{d.1} that
\begin{align}
\gamma^p\norm{H(\calR_{\delta_\lambda}\ba_\gamma)}_{\Sch_p}^p
&\ge C_p
\gamma\int_0^2\int_{-1/2\gamma}^{1/2\gamma}\left|
\gamma\sum_{j\ge 0} \bw(\tau j)\ba(\gamma (j+\lambda))
e^{2\pi ij\gamma s} \right|^p\,ds\frac{d\tau}{\tau^2} \nonumber\\
&= C_p
\int_0^{2/\gamma}\int_{-1/2\gamma}^{1/2\gamma}\left|
\gamma\sum_{j\ge 0} \bw(\tau \gamma j)\ba(\gamma (j+\lambda))
e^{2\pi ij\gamma s} \right|^p\,ds\frac{d\tau}{\tau^2}.
\label{Riemann sum}
\end{align}
Since $\ba$ is continuous, for each $s\in\bbR$ and $\tau>0$ the integrand
in \eqref{Riemann sum} converges to $|\boldsymbol\calF^{-1}(\ba \bw^\tau)(s)|^p$
as $\gamma\to 0$. Then by Fatou's Lemma we see that
\begin{align}
\norm{\bH(\ba)}_{\Sch_p}^p
&\asymp
\int_0^\infty \norm{\boldsymbol\calF^{-1}(\ba \bw^\tau)}_{L^p(\bbR)}^p
\frac{d\tau}{\tau^2} \nonumber\\
&\le \lim_{\gamma\to 0}
\int_0^{2/\gamma}\int_{-1/2\gamma}^{1/2\gamma}\left|
\gamma\sum_{j\ge 0} \bw(\tau \gamma j)\ba(\gamma (j+\lambda))
e^{2\pi ij\gamma s} \right|^p\,ds\frac{d\tau}{\tau^2} \nonumber\\
&\le C_p \lim_{\gamma\to 0}
\gamma^p\norm{H(\calR_{\delta_\lambda}\ba_\gamma)}_{\Sch_p}^p.
\label{d.3}
\end{align}
This gives an analogue of Igari's theorem for Fourier multipliers \cite{Igari}.
Combining \eqref{d.3} with \eqref{c11} gives the estimate
$$
\norm{\bH(\ba)}_{\Sch_p}
\asymp
\sup_{\gamma>0}\gamma\norm{H(\calR_{\delta_\lambda}\ba_\gamma)}_{\Sch_p},
\quad 
0<p\leq1.
$$

%%%%%%%%%%%%%%%%%%%%%%%%%%%%%%%%%%%%%%%
%%%%%%%%%%%%%%%%%%%%%%%%%%%%%%%%%%%%%%%
\section{Pointwise restriction for non-negative operators}
%%%%%%%%%%%%%%%%%%%%%%%%%%%%%%%%%%%%%%%
%%%%%%%%%%%%%%%%%%%%%%%%%%%%%%%%%%%%%%%

\subsection{Statement of the result}

Although Theorem~\ref{pointwise} fails for $p>1$, the estimate \eqref{pointwise1}
remains valid for all $0<p<\infty$ if we restrict to the class of non-negative
operators (in the usual quadratic form sense). 
Before stating this precisely we recall (see e.g. \cite[page 22]{Widom}) that a 
bounded integral Hankel operator $\bH(\ba)$ is non-negative 
 if and only if the kernel function $\ba$ can be represented as
\[
\ba(t)=\int_0^\infty e^{-t\eta}d\mu(\eta),
\label{c4}
\]
where the measure $\mu$ satisfies
$$
\mu((0,\eta)) \leq C\eta, \quad \eta>0.
$$
In particular, it follows that the kernel function $\ba(t)$ is continuous in $t>0$, 
and therefore the restriction $\calR_{\delta_\lambda}\ba$ is well defined for all $\lambda>0$.

We have the following theorem.

%%%%%%%%%%%%%%%%%%%%
\begin{theorem}\label{positive}
%%%%%%%%%%%%%%%%%%%%
Let $\bH(\ba)\ge 0$ be a bounded integral Hankel operator and $\lambda>0$. 
Then the following hold:
\begin{enumerate}[\rm (i)]
\item 
If $\bH(\ba)\in\calB$ then $H(\calR_{\delta_\lambda}\ba)\in\calB$ and
\[
\norm{H(\calR_{\delta_\lambda}\ba)}_\calB
\leq
C(1+1/\lambda)
\norm{\bH(\ba)}_{\calB}. 
\label{c7}
\]

\item
If $\bH(\ba)\in\Sch_p$ for some $0<p\leq\infty$, then
$H(\calR_{\delta_\lambda}\ba)\in\Sch_p$ and
\[
\norm{H(\calR_{\delta_\lambda}\ba)}_{\Sch_p} 
\leq 
C_{p}(1+1/\lambda) \norm{\bH(\ba)}_{\Sch_p}.
\label{c8}
\]
\end{enumerate}
\end{theorem}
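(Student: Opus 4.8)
The plan is to use the integral representation \eqref{c4} of non-negative operators to write both $\bH(\ba)$ and $H(\calR_{\delta_\lambda}\ba)$ as superpositions of non-negative rank-one operators indexed by the \emph{same} measure $\mu$, and then to compare them. Write $\ba(t)=\int_0^\infty e^{-t\eta}\,d\mu(\eta)$ and set $\alpha=\calR_{\delta_\lambda}\ba$, so that $\alpha(n)=\ba(n+\lambda)=\int_0^\infty e^{-n\eta}\,d\mu_\lambda(\eta)$ with $d\mu_\lambda(\eta)=e^{-\lambda\eta}\,d\mu(\eta)$. Denoting $e_x=\{x^j\}_{j\ge0}\in\ell^2(\bbZ_+)$ for $x\in(0,1)$, a direct computation of the quadratic form gives
$$
\langle H(\alpha)x,x\rangle=\int_0^\infty\Bigl|\sum_{j\ge0}x(j)e^{-j\eta}\Bigr|^2 d\mu_\lambda(\eta),
$$
i.e. $H(\alpha)$ is the weak integral of the non-negative rank-one operators $\langle\,\cdot\,,e_{e^{-\eta}}\rangle\,e_{e^{-\eta}}$ against $d\mu_\lambda$. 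Two facts from the theory of non-negative Hankel operators are used throughout: the Carleson condition $\mu((0,\eta))\le C_0\eta$ with $C_0\asymp\norm{\bH(\ba)}_\calB$, and the resulting bound $\ba(\lambda)=\int_0^\infty e^{-\lambda\eta}\,d\mu(\eta)\le C_0/\lambda$, obtained by integrating the Carleson condition against $e^{-\lambda\eta}$. This last estimate is the source of the factor $1/\lambda$.

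Next I would split the measure at $\eta=1$, writing $\mu_\lambda=\mu_\lambda'+\mu_\lambda''$ with $\mu_\lambda'=\mu_\lambda|_{(0,1]}$ and $\mu_\lambda''=\mu_\lambda|_{(1,\infty)}$, so that $H(\alpha)=H(\alpha')+H(\alpha'')$ splits into two non-negative bounded Hankel matrices. For the low-frequency part I would compare $H(\alpha')$ to a Hankel matrix obtained by \emph{averaging} $\ba$. Let $U:\ell^2(\bbZ_+)\to L^2(0,\infty)$ be the isometry $Ux=\sum_{j\ge0}x(j)\1_{[j,j+1)}$; a short computation shows $U^*\bH(\ba)U=H(\beta_0)$, where $\beta_0(n)=\int_0^2\ba(n+u)\Lambda(u)\,du$ and $\Lambda$ is the triangle function (the autocorrelation of $\1_{[0,1)}$). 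In the variable $\eta$ this reads $d\mu_{\beta_0}(\eta)=\wh\Lambda(\eta)\,d\mu(\eta)$ with $\wh\Lambda(\eta)=\int_0^2 e^{-u\eta}\Lambda(u)\,du$, and since $\wh\Lambda$ is decreasing we have $\wh\Lambda(\eta)\ge c_0:=\wh\Lambda(1)>0$ on $(0,1]$. Consequently $d\mu_\lambda'\le d\mu|_{(0,1]}\le c_0^{-1}\,d\mu_{\beta_0}$, and because both operators are built from the same rank-one kernels this yields the operator inequality $0\le H(\alpha')\le c_0^{-1}H(\beta_0)$. Weyl's monotonicity principle $s_n(A)\le s_n(B)$ for $0\le A\le B$ then gives $\norm{H(\alpha')}_{\Sch_p}\le c_0^{-1}\norm{H(\beta_0)}_{\Sch_p}\le c_0^{-1}\norm{\bH(\ba)}_{\Sch_p}$ for every $0<p\le\infty$, the last step because $U$ is an isometry.

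For the high-frequency part I would argue directly, exploiting that the entries of $H(\alpha'')$ decay geometrically: since $e^{-n\eta}\le e^{-n}$ for $\eta>1$, we have $\abs{\alpha''(n)}\le e^{-n}\mu_\lambda''((1,\infty))\le e^{-n}\ba(\lambda)\le e^{-n}C_0/\lambda$. Feeding this into Proposition~\ref{thm.peller}(i) and bounding each $\calF^{-1}(\alpha''w_m)$ in $L^p(\bbT)$ by the $\ell^1$-norm of $\alpha''w_m$, the super-exponential gain $e^{-p2^{m-1}}$ beats the factor $2^{m(1+p)}$ and the resulting series converges to $C_p(C_0/\lambda)^p$; hence $\norm{H(\alpha'')}_{\Sch_p}\le C_p\,C_0/\lambda$ for all $0<p\le\infty$. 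Combining the two parts by the (quasi-)triangle inequality and recalling $C_0\asymp\norm{\bH(\ba)}_\calB\le\norm{\bH(\ba)}_{\Sch_p}$ gives \eqref{c8}; the case $p=\infty$ is \eqref{c7}.

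The main obstacle is exactly what dictates the split. Pointwise values and local averages of the completely monotone kernel $\ba$ are comparable only away from the origin, so the measure domination $\mu_\lambda\le C\mu_{\beta_0}$ fails for large $\eta$ (equivalently, $\wh\Lambda(\eta)\to0$), and the naive attempt to write $H(\alpha)=W\bH(\ba)W^*$ with $W$ the sampling operator $f\mapsto\{f(j+\lambda/2)\}_{j\ge0}$ founders because that operator is unbounded on $L^2(0,\infty)$. The large-$\eta$ régime must therefore be treated separately, and the delicate point is to do so without losing the correct \emph{linear} dependence on the norm: a crude rank-one $p$-triangle estimate would produce $(C_0/\lambda)^{1/p}$ in place of $C_0/\lambda$, and it is the geometric decay of the entries that restores the right homogeneity.
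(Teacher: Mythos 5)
Your proof is correct, and it takes a genuinely different route from the paper. The paper splits the argument by exponent: for $p\in2\bbN\cup\{\infty\}$ it uses pointwise kernel domination ($\ba(\lambda+\floor{t}+\floor{s})\le\ba(t+s)$ plus Simon's theorem, which is exactly the tool that breaks down for $p\notin2\bbN$), and for general $1\le p<\infty$ it runs a separate hands-on Besov-norm computation (Abel summation for an upper bound on $\norm{\calF^{-1}(b\,w_m)}_{L^p(\bbT)}$, a derivative estimate for a lower bound on $\norm{\boldsymbol\calF^{-1}(\bb\,\bw_m)}_{L^p(\bbR)}$, then matching dyadic blocks). You instead exploit the Widom representation \eqref{c4} to realise both operators as superpositions of the \emph{same} positive rank-one operators, and your domination of the low-frequency part is an honest operator inequality $0\le H(\alpha')\le c_0^{-1}\,U^*\bH(\ba)U$ rather than a kernel-wise one; since singular-value monotonicity under $0\le A\le B$ is valid unconditionally, this single mechanism covers all $0<p\le\infty$ at once, while the high-frequency part is an exponentially small (essentially trace-class) error carrying the $1/\lambda$ — and your Peller-based summation there correctly preserves the linear dependence on $C_0/\lambda$, as you note. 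Two small points to tighten: (a) the paper's citation of \eqref{c4} asserts only $\mu((0,\eta))\le C\eta$ qualitatively, so the quantitative claim $C_0\lesssim\norm{\bH(\ba)}_\calB$ that you use twice should be justified — it follows in one line by testing the quadratic form with $\f(t)=e^{-\eta t}$, which gives $\mu((0,\eta))\le 2\eta\norm{\bH(\ba)}_\calB$; alternatively, the bound $\ba(\lambda)\le C\norm{\bH(\ba)}_\calB/\lambda$ you need is exactly Lemma~\ref{lma.c1} of the paper; (b) you should record that $H(\alpha')$ is compact when $\bH(\ba)$ is, which follows from the same operator inequality ($\jap{H(\alpha')x_n,x_n}\le c_0^{-1}\jap{H(\beta_0)x_n,x_n}\to0$ along orthonormal sequences, so $H(\alpha')^{1/2}$ is compact). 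Structurally, your compression $U^*\bH(\ba)U$ is the measure-side counterpart of the paper's tensor factorisation $K=H(\calR_{\delta_\lambda}\ba)\otimes(\cdot,\1)\1$; what your version buys is a uniform proof for all exponents (no case split, no Besov computation), at the cost of relying on the special completely monotone structure more heavily and yielding less explicit constants than the paper's $\lambda\ge2$ refinements.
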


\begin{remark}
\begin{enumerate}
\item
Observe that by \eqref{c4}, the kernel function 
$\ba$ is necessarily positive, monotone decreasing and continuous
on $(0,\infty)$. In fact, the proof of Theorem~\ref{positive} depends only on these properties of $\ba$. 
\item
If $\lambda\geq2$, one can slightly improve the statement of Theorem~\ref{positive}. 
In this case one gets
\begin{align*}
\norm{H(\calR_{\delta_\lambda}\ba)}_\calB &\leq \norm{\bH(\ba)}_\calB,
\quad \lambda\geq2,
\\
\norm{H(\calR_{\delta_\lambda}\ba)}_{\Sch_p} &\leq \norm{\bH(\ba)}_{\Sch_p}, 
\quad \lambda\geq2, \quad p\in 2\bbN,
\end{align*}
\end{enumerate}
i.e. the constants in the estimates are equal to one in these cases. 
\end{remark}

In the rest of this section we prove Theorem~\ref{positive}. 
Observe that we only need to consider the case $p>1$, as for $0<p\leq1$ the required result follows from Theorem~\ref{pointwise}. 

Our proof consists of two different parts.
The first one is a short operator theoretic argument based on pointwise domination which however works only for $p\in2\bbN$ or $p=\infty$. 
The second one is a direct calculation based on Proposition~\ref{thm.peller} which applies to all $p\geq1$.

\subsection{Proof for  $p\in2\bbN\cup\{\infty\}$}

First we need a version of \eqref{cc1} for non-negative operators. 
%%%%%%%%%%%%%
\begin{lemma}\label{lma.c1}
%%%%%%%%%%%%%
Let $\bH(\ba)\ge 0$ be a bounded integral Hankel operator and $\lambda>0$. 
Then 
$$
\lambda\ba(\lambda)\leq 2\norm{\bH(\ba)}_\calB. 
$$
\end{lemma}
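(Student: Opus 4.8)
The plan is to combine the variational characterisation of the operator norm of a non-negative operator with the monotonicity of the kernel. Since $\bH(\ba)\ge0$, the representation \eqref{c4} shows (as recorded in the Remark) that $\ba$ is positive, continuous and monotone decreasing on $(0,\infty)$; these are the only properties of $\ba$ I will use. For a bounded non-negative operator one has
$$
\norm{\bH(\ba)}_\calB=\sup\bigl\{\langle\bH(\ba)\f,\f\rangle:\ \norm{\f}_{L^2(0,\infty)}=1\bigr\},
$$
so it suffices to exhibit a single normalised test function $\f$ whose quadratic form is bounded below by $\tfrac12\lambda\ba(\lambda)$.

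The key idea is to support $\f$ on an interval so that the argument $t+s$ of the kernel never exceeds $\lambda$, and then use monotonicity to replace $\ba(t+s)$ by $\ba(\lambda)$. Concretely, I would take $\f=\sqrt{2/\lambda}\,\1_{(0,\lambda/2)}$, which satisfies $\norm{\f}_{L^2}=1$. Then
$$
\langle\bH(\ba)\f,\f\rangle
=\frac{2}{\lambda}\int_0^{\lambda/2}\!\!\int_0^{\lambda/2}\ba(t+s)\,ds\,dt.
$$
For $t,s\in(0,\lambda/2)$ we have $t+s<\lambda$, so $\ba(t+s)\ge\ba(\lambda)$ by monotonicity. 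Hence the double integral is at least $(\lambda/2)^2\ba(\lambda)$, and the right-hand side is at least $\tfrac12\lambda\ba(\lambda)$. Combining this with $\langle\bH(\ba)\f,\f\rangle\le\norm{\bH(\ba)}_\calB\norm{\f}_{L^2}^2=\norm{\bH(\ba)}_\calB$ gives $\tfrac12\lambda\ba(\lambda)\le\norm{\bH(\ba)}_\calB$, which is the assertion.

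There is no serious obstacle here: monotonicity of $\ba$ reduces the problem to a one-line lower bound on the quadratic form, and the only point requiring thought is the choice of support. Among intervals $(c,c+L)$ on which $t+s\le\lambda$ throughout, i.e. with $c+L\le\lambda/2$, the choice $c=0$, $L=\lambda/2$ maximises the length $L$ and hence the resulting lower bound $L\,\ba(\lambda)$; this is what produces the factor $2$ in the stated estimate.
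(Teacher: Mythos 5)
Your proof is correct and takes essentially the same approach as the paper: both arguments test the quadratic form of $\bH(\ba)$ on a single normalised function living at scale $\sim\lambda$ and use the monotonicity of $\ba$ to bound the form below by a multiple of $\lambda\ba(\lambda)$, then compare with $\norm{\bH(\ba)}_\calB$. The only difference is the choice of test function --- you use $\sqrt{2/\lambda}\,\1_{(0,\lambda/2)}$ where the paper uses $e^{-t/\lambda}$ --- and your choice gives the constant $2$ directly, whereas the paper computes $\frac{1}{2(1-2e^{-1})}\approx 1.9$ and rounds up.
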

\begin{proof}
Take $\f(t)=e^{-t/\lambda}$; then $\norm{\f}^2_{L^2(0,\infty}=\lambda/2$ 
and using the monotonicity of $\ba(t)$, 
\begin{multline*}
(\bH(\ba)\f,\f)
=
\int_0^\infty \int_0^\infty \ba(t+s)e^{-(t+s)/\lambda}dt\, ds
=
\int_0^\infty \ba(t)e^{-t/\lambda}tdt
\\
\geq
\int_0^\lambda \ba(t)e^{-t/\lambda}tdt
\geq
\ba(\lambda)\int_0^\lambda e^{-t/\lambda}tdt
=(1-2e^{-1})\lambda^2\ba(\lambda). 
\end{multline*}
On the other hand, 
$$
(\bH(\ba)\f,\f)\leq \norm{\bH(\ba)}_\calB\norm{\f}^2_{L^2(0,\infty)}
=(\lambda/2) \norm{\bH(\ba)}_\calB. 
$$
Combining these two estimates, we obtain 
$$
\lambda\ba(\lambda)\leq
\frac{1}{2(1-2e^{-1})}\norm{\bH(\ba)}_\calB\leq 2\norm{\bH(\ba)}_\calB,
$$
as required. 
\end{proof}

\begin{proof}[Proof of Theorem~\ref{positive} for $p\in2\bbN\cup\{\infty\}$]

First let us \textbf{assume that $\lambda\geq2$.}
Let $K$ be the integral operator in $L^2(0,\infty)$ with the integral kernel
$$
K(t,s)=\ba(\lambda+\floor{t}+\floor{s}),
$$
where $\floor{t}$ is the largest integer less than or equal to $t$. 
Since 
$$
\lambda+\floor{t}+\floor{s}\geq \lambda+(t-1)+(s-1)\geq t+s,
$$ 
by monotonicity of $\ba$ we have
$$
K(t,s)\leq \ba(t+s).
$$
In the terminology of \cite[Chapter 2]{Simon}, this means that $K$ is 
\emph{pointwise dominated} by $\bH(\ba)$.  
By \cite[Theorem 2.13]{Simon}, it follows that 
$$
\norm{K}\leq \norm{\bH(\ba)}_\calB \quad \text{ and} \quad 
\norm{K}_{\Sch_p}\leq \norm{\bH(\ba)}_{\Sch_p}
$$
for all $p\in 2\bbN$. 
(This implication does not extend to $p\not\in 2\bbN$; see e.g. \cite{Peller2,Simon2}.) 
It is also true (see \cite{Pitt,DoddsFremlin}) that the compactness of $\bH(\ba)$ 
implies the compactness of $K$. 

Next, let us relate $K$ to $H(\calR_{\delta_\lambda}\ba)$. 
For $\f\in L^2(0,\infty)$ let us write the quadratic form of $K$ as
$$
(K\f,\f)=\sum_{j,k\geq0}\ba(\lambda+j+k)f_j \overline{f_k}, 
\qquad
f_j=\int_j^{j+1}\f(t)dt.
$$
This means that, writing 
$L^2(0,\infty)=\ell^2(\bbZ_+)\otimes L^2(0,1)$, the operator $K$ can be represented
as 
$$
K=H(\calR_{\delta_\lambda}\ba)\otimes (\cdot,\1)\1,
$$
where $(\cdot,\1)\1$ is the rank one operator in $L^2(0,1)$ acting as 
$$
\f\mapsto \int_0^1\f(t)dt. 
$$
It follows that 
$$
\norm{K}_\calB=\norm{H(\calR_{\delta_\lambda}\ba)}_\calB
\quad\text{ and }\quad
\norm{K}_{\Sch_p}=\norm{H(\calR_{\delta_\lambda}\ba)}_{\Sch_p}
$$
for all $p>0$. This completes the proof for $\lambda\geq2$ and $p\in 2\bbN\cup\{\infty\}$.

Let us consider the case $0<\lambda<2$. 
Let $P_2$ be the projection onto $\ell^2(\{2,3,\dots\})$ in $\ell^2(\bbZ_+)$. 
Write
$$
H(\calR_{\delta_\lambda}\ba)=P_2H(\calR_{\delta_\lambda}\ba)P_2+\wt H.
$$
The operator $\wt H$ is of rank $\leq4$. Inspecting the matrix elements of $\wt H$ and  using Lemma~\ref{lma.c1}, it is easy to see
that 
$$
\norm{\wt H}_{\Sch_1}\leq C\norm{\bH(\ba)}_{\calB}/\lambda, \quad \lambda>0.
$$
On the other hand, 
the operator $P_2 H(\calR_{\delta_\lambda}\ba)P_2$ is unitarily equivalent to 
$H(\calR_{\delta_{\lambda+2}}\ba)$. Thus, applying the previous step of the proof, we
obtain 
$$
\norm{P_2 H(\calR_{\delta_\lambda}\ba)P_2}_\calB
\leq
\norm{\bH(\ba)}_\calB
\quad \text{ and }\quad
\norm{P_2 H(\calR_{\delta_\lambda}\ba)P_2}_{\Sch_p}
\leq
\norm{\bH(\ba)}_{\Sch_p}
$$
for $p\in2\bbN$. Combining these estimates, we arrive at  
\eqref{c7} and \eqref{c8} for $p\in 2\bbN$. 
\end{proof}

As already mentioned, this proof does not  extend to $p\not\in 2\bbN$; see e.g. \cite{Peller2,Simon2}. 
Below we give a different proof which works for all $1\leq p<\infty$, but does not give
precise information about the constants in the estimates.

\subsection{Proof of Theorem~\ref{positive} for $1\leq p<\infty$}

In order to simplify our notation, we set $\bb(t)=\ba(t+\lambda)$, $b(k)=\ba(k+\lambda)$, and 
\begin{align*}
\wc b_m(z)&=\sum_{k\geq0}b(k)w_m(k)z^k, \quad m\in\bbZ_+, \quad z\in \bbT,
\\
\wc \bb_m(\xi)&=\int_0^\infty \bb(t) \bw_m(t) e^{2\pi i \xi t} dt, \quad m\in \bbZ, \quad \xi\in\bbR.
\end{align*}
The core of the proof is the bound
\[
\sum_{m\geq1} 2^m \norm{\wc b_m}_{L^p(\bbT)}^p
\leq
C_p
\sum_{m\in\bbZ}2^m \norm{\wc \bb_m}_{L^p(\bbR)}^p,
\label{c5}
\]
which we prove below. 
Throughout the proof, we use the property that $\bb$ and $b$ are positive and monotone decreasing.

\textbf{First step: upper bound for $\norm{\wc b_m}_{L^p(\bbT)}$.}
Fix $m\geq1$. 
First we prepare two pointwise bounds for $\wc b_m(z)$. The first one is trivial:
\[
\abs{\wc b_m(z)}
\leq 
\sum_k b(k)w_m(k)
\leq 
2^{m+1}b(2^{m-1}).
\label{c9}
\]
The second one is obtained through a discrete version of integration by parts (Abel summation). 
We have
\begin{multline*}
\wc b_m(z)
=
\frac{1}{z-1}\sum_k b(k)w_m(k)(z^{k+1}-z^k)
\\
=
\frac{1}{z-1}\sum_k \bigl(b(k)w_m(k)-b(k+1)w_m(k+1)\bigr)z^{k+1}
\\
=
\frac{1}{z-1}\sum_k \bigl((b(k)-b(k+1))w_m(k)+b(k+1)(w_m(k)-w_m(k+1))\bigr)z^{k+1},
\end{multline*}
and therefore 
\begin{multline*}
\abs{\wc b_m(z)}
\leq
\frac{1}{\abs{z-1}}\sum_{k=2^{m-1}}^{2^{m+1}} (b(k)-b(k+1)) \\
+
\frac{1}{\abs{z-1}}\sum_{k=2^{m-1}}^{1+2^{m+1}} b(k)\abs{w_m(k-1)-w_m(k)}.
\end{multline*}
Clearly, the first sum here is telescoping. For the second sum, we use the estimate
$$
\abs{w_m(k-1)-w_m(k)}\leq C2^{-m}. 
$$
Putting this together, we obtain
\begin{multline}
\abs{\wc b_m(z)}
\leq
\frac{1}{\abs{z-1}}
(b(2^{m-1})-b(1+2^{m+1})) \\
+
\frac{C}{\abs{z-1}}2^{-m}\sum_{k=2^{m-1}}^{1+2^{m+1}} b(k)
\leq
\frac{C}{\abs{z-1}}b(2^{m-1}),
\label{c10}
\end{multline}
which is our  second bound for $\wc b_m(z)$. 

Now we can estimate the norm $\norm{\wc b_m}_{L^p(\bbT)}$. 
We split the integral over the unit circle into two parts and estimate them separately. 
Using \eqref{c9}, we obtain
$$
2^m\int_{\abs{t}<2^{-m}}\abs{\wc b_m(e^{2\pi it})}^pdt
\leq 
C2^{pm}b(2^{m-1})^p.
$$
Using \eqref{c10}, we get
\begin{multline*}
2^m\int_{\abs{t}>2^{-m}}\abs{\wc b_m(e^{2\pi it})}^p dt
\leq 
C2^m\int_{\abs{t}>2^{-m}}\frac{dt}{\abs{e^{2\pi it}-1}^p}b(2^{m-1})^p
\\
\leq
C2^m\int_{2^{-m}}^1\frac{dt}{t^p}b(2^{m-1})^p
\leq
C2^{pm}b(2^{m-1})^p. 
\end{multline*}
Combining the estimates for two integrals above, we obtain
$$
2^m\norm{\wc b_m}^p_{L^p(\bbT)}\leq C2^{pm}b(2^{m-1})^p.
$$

\textbf{Second step: lower bound for $\norm{\wc \bb_m}_{L^p(\bbR)}$.}
For the derivative of $\bb_m$ we have
$$
\wc \bb_m'(\xi)=2\pi i\int_0^\infty \bb(t)\bw_m(t)te^{2\pi it\xi}dt,
$$
and therefore
$$
\abs{\wc \bb_m'(\xi)}
\leq
2\pi 
\int_0^\infty \bb(t)\bw_m(t)tdt
\leq 
2^{m+2}\pi \int_0^\infty \bb(t)\bw_m(t)dt
=
2^{m+2}\pi \wc \bb_m(0).
$$
It follows that 
$$
\abs{\wc \bb_m(\xi)-\wc \bb_m(0)}\leq \abs{\xi}2^{m+2}\pi \wc\bb_m(0),
$$
and therefore for $\abs{\xi}<2^{-m-5}$ we have 
$$
\abs{\wc \bb_m(\xi)}\geq \wc\bb_m(0)/2.
$$
We use this to obtain a lower bound for the integral of $\abs{\wc \bb_m}^p$: 
\begin{multline*}
2^m\int_\bbR\abs{\wc\bb_m(\xi)}^pd\xi
\geq 
2^m \int_{\abs{\xi}<2^{-m-5}}\abs{\wc\bb_m(\xi)}^pd\xi
\geq
2^{-5}( \wc\bb_m(0)/2)^p=C\wc\bb_m(0)^p.
\end{multline*}
Finally,
$$
\wc\bb_m(0)
=
\int_0^\infty \bb(t)\bw_m(t)dt
\geq
\bb(2^{m+1})\int_0^\infty \bw_m(t)dt
=C2^{m}\bb(2^{m+1}),
$$
and so we obtain 
$$
2^m\norm{\wc \bb_m}_{L^p(\bbR)}^p\geq C 2^{mp}b(2^{m+1})^p.
$$

\textbf{Combining the two steps and completing the proof.}
Combining the upper bound for $\norm{\wc b_m}_{L^p(\bbT)}$ and the lower bound
for $\norm{\wc\bb_m}_{L^p(\bbR)}$, we obtain
$$
2^m\norm{\wc b_m}_{L^p(\bbT)}^p
\leq 
C2^{p(m-1)}b(2^{m-1})^p
\leq
C2^{m-2}\norm{\wc \bb_{m-2}}_{L^p(\bbR)}^p, \quad m\geq1.
$$
Summing over $m$, we obtain the bound \eqref{c5}. 

By Proposition~\ref{thm.peller}(i), we have
\[
\norm{H(b)}_{\Sch_p}^p
\leq
C_p\abs{b(0)}^p+
C_p\sum_{m\geq0}  2^m \norm{\wc b_m}_{L^p(\bbT)}^p.
\label{dd}
\]
By Lemma~\ref{lma.c1}, we have
$$
\abs{b(0)}^p=\abs{\ba(\lambda)}^p\leq 2^p \norm{\bH(\ba)}_\calB^p/\lambda^p. 
$$
Similarly, the $m=0$ term in the series in \eqref{dd} can be estimated as follows:
$$
\norm{\wc b_0}_{L^p(\bbT)}^p=\abs{b(1)}^p=\abs{\ba(\lambda+1)}^p
\leq 2^p\norm{\bH(\ba)}_\calB^p/(1+\lambda)^p
\leq 2^p\norm{\bH(\ba)}_\calB^p/\lambda^p.
$$
Combining this with \eqref{c5} and using 
 Proposition~\ref{thm.peller}(ii), we obtain 
$$
\norm{H(b)}_{\Sch_p}^p
\leq
C_p
\norm{\bH(\ba)}_\calB^p/\lambda^p
+
C_p\sum_{m\in\bbZ}2^m \norm{\wc \bb_m}_{L^p(\bbR)}^p
\leq
C_p
\norm{\bH(\ba)}_\calB^p/\lambda^p
+
C_p\norm{\bH(\bb)}_{\Sch_p}^p.
$$
Finally, as in \eqref{c6}, we have $\norm{\bH(\bb)}_{\Sch_p}\leq \norm{\bH(\ba)}_{\Sch_p}$, 
and  we arrive at the required estimate \eqref{c8}.

%%%%%%%%%%%%%%%%%%%%%%%%%%%%%%%%%%%%%%%%
%%%%%%%%%%%%%%%%%%%%%%%%%%%%%%%%%%%%%%%%
\section{Restriction by averaging}\label{sec.d}
%%%%%%%%%%%%%%%%%%%%%%%%%%%%%%%%%%%%%%%%
%%%%%%%%%%%%%%%%%%%%%%%%%%%%%%%%%%%%%%%%

\subsection{Boundedness of restrictions by averaging}

The main result of this section says that if the function $\bvphi$ is sufficiently regular,
then the map $\bH(\ba)\mapsto H(\calR_\bvphi\ba)$ is bounded with respect to all
Schatten norms. We will make use of the periodisation operator $\calP$ from Section
\ref{sec.b3}.

\begin{theorem}\label{average}
	Let $\bvphi\in C(\bbR)$ be such that $\supp\bvphi\subset[0,\infty)$ and 
	$\calP(\abs{\wc\bvphi})\in L^\infty(\bbT)$. 
	Then there exist bounded operators $\Phi_1$ and $\Phi_2$ acting from
	$\ell^2(\bbZ_+)$ to $L^2(0,\infty)$ such that
	\[
	\Phi_2^*\bH(\ba)\Phi_1=H(\calR_\bvphi\ba)
	\label{average2}
	\]
	and $\norm{\Phi_1}_\calB=\norm{\Phi_2}_\calB =\sqrt{A}$, where $A=\norm{\calP(\abs{\wc\bvphi})}_{L^\infty(\bbT)}$. 
	Consequently, we have
$$
\norm{H(\calR_{\bvphi}\ba)}_{\calB}\leq A\norm{\bH(\ba)}_{\calB}
\quad \text{ and }\quad
\norm{H(\calR_{\bvphi}\ba)}_{\Sch_p}\leq A\norm{\bH(\ba)}_{\Sch_p}
$$
for every $0<p<\infty$. 
\end{theorem}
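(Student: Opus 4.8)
The plan is to reduce the two norm inequalities to the operator identity \eqref{average2} together with the norm bound $\norm{\Phi_1}_\calB=\norm{\Phi_2}_\calB=\sqrt A$: once these are established, the ideal property $\norm{\Phi_2^*\bH(\ba)\Phi_1}_{\Sch_p}\leq\norm{\Phi_2}_\calB\norm{\bH(\ba)}_{\Sch_p}\norm{\Phi_1}_\calB$, valid for all $0<p<\infty$ (and likewise for the operator norm), yields both displayed estimates with constant $\norm{\Phi_2}_\calB\norm{\Phi_1}_\calB=A$. So the entire problem is the construction of $\Phi_1$ and $\Phi_2$.

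I would look for $\Phi_1,\Phi_2\colon\ell^2(\bbZ_+)\to L^2(0,\infty)$ of translation type, namely $\Phi_1 e_k=u(\cdot-k)$ and $\Phi_2 e_j=v(\cdot-j)$ for two functions $u,v\in L^2(0,\infty)$ to be chosen (defined first on finitely supported sequences and extended by boundedness). Since $\supp u,\supp v\subset[0,\infty)$, each $u(\cdot-k)$ and $v(\cdot-j)$ does lie in $L^2(0,\infty)$ for $j,k\geq0$. Writing $(\Phi_2^*\bH(\ba)\Phi_1)_{jk}=\langle\bH(\ba)u(\cdot-k),v(\cdot-j)\rangle$, substituting $s\mapsto s+k$, $t\mapsto t+j$ and applying Fubini, this double integral collapses to $\int_0^\infty\ba(w+j+k)(u*\overline v)(w)\,dw$; on the other hand $\calR_\bvphi\ba(j+k)=\int_0^\infty\ba(w+j+k)\bvphi(w)\,dw$ after the change of variable $t=w+j+k$ and use of $\supp\bvphi\subset[0,\infty)$. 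Hence \eqref{average2} holds precisely when $u*\overline v=\bvphi$ on $(0,\infty)$, equivalently, on the Fourier side, when $\wc u\,\wc{\overline v}=\wc\bvphi$.

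The norms are then computed via Toeplitz operators. The Gram matrix $\Phi_1^*\Phi_1$ has entries $\langle u(\cdot-k),u(\cdot-l)\rangle$, which depend only on $k-l$, so $\Phi_1^*\Phi_1$ is the Toeplitz operator on $\ell^2(\bbZ_+)$ whose nonnegative symbol is the periodization $\calP(\abs{\wc u}^2)$; using that a Toeplitz operator with nonnegative symbol has norm equal to the essential supremum of its symbol, $\norm{\Phi_1}_\calB^2=\norm{\calP(\abs{\wc u}^2)}_{L^\infty(\bbT)}$. The analogous computation gives $\norm{\Phi_2}_\calB^2=\norm{\calP(\abs{\wc v}^2)}_{L^\infty(\bbT)}$, where the reflection present in $\abs{\wc v}$ does not change the supremum of the periodization. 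To obtain exactly $\sqrt A$ I therefore impose the modulus condition $\abs{\wc u}=\abs{\wc v}=\abs{\wc\bvphi}^{1/2}$ on $\bbR$, which forces $\norm{\Phi_1}_\calB^2=\norm{\Phi_2}_\calB^2=\norm{\calP(\abs{\wc\bvphi})}_{L^\infty(\bbT)}=A$; boundedness of the $\Phi_i$ is then exactly the hypothesis $A<\infty$.

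The heart of the argument, and the main obstacle, is producing $u,v\in L^2(0,\infty)$ satisfying simultaneously $\wc u\,\wc{\overline v}=\wc\bvphi$ and $\abs{\wc u}=\abs{\wc v}=\abs{\wc\bvphi}^{1/2}$; this is an even-modulus Riesz factorization of $\wc\bvphi$ into two Hardy-space functions of the upper half-plane. First I would check that $\wc\bvphi\in H^1$ of the upper half-plane: analyticity follows from $\supp\bvphi\subset[0,\infty)$, and integrability on $\bbR$ follows from the hypothesis, since $\int_\bbR\abs{\wc\bvphi}=\int_0^1\calP(\abs{\wc\bvphi})(e^{2\pi it})\,dt\leq A$. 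Taking the inner--outer factorization $\wc\bvphi=\Theta\,O$ with $O$ outer and $\abs{O}=\abs{\wc\bvphi}$ a.e.\ on $\bbR$, together with the outer square root $O^{1/2}$ of modulus $\abs{\wc\bvphi}^{1/2}$, I would set $\wc u=\Theta\,O^{1/2}$ and $\wc{\overline v}=O^{1/2}$; both lie in $H^2$ of the upper half-plane, so $u$ and $v$ lie in $L^2(0,\infty)$, their product is $\wc\bvphi$, and the moduli are as required. The only delicate point is existence of the outer factor, i.e.\ log-integrability of $\abs{\wc\bvphi}$, which is automatic for a nonzero $H^1$ function.
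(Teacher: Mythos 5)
Your proposal is correct and takes essentially the same route as the paper: the paper likewise builds translation-type operators $\Phi_i x=\sum_j x(j)\bvphi_i(\cdot-j)$ from an equal-modulus factorization $\wc\bvphi(\xi)=\wc\bvphi_1(\xi)\overline{\wc\bvphi_2(-\xi)}$ of $\wc\bvphi\in H^1(\bbC_+)$ (your inner--outer construction $\wc u=\Theta O^{1/2}$, $\wc{\overline v}=O^{1/2}$ is the standard realization of this), verifies \eqref{average2} by the same quadratic-form/matrix-element computation, and obtains $\norm{\Phi_i}_\calB=\sqrt{A}$ via Parseval and periodization in Lemma~\ref{lma.d2}, which is exactly your Gram-matrix/Toeplitz norm identity in different packaging. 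The only slip is notational: with your construction $\abs{\wc v(\xi)}=\abs{\wc\bvphi(-\xi)}^{1/2}$ rather than $\abs{\wc\bvphi(\xi)}^{1/2}$, but, as you yourself note, the reflection does not affect the supremum of the periodization.
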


A close inspection of the proof of Theorem \ref{average} will reveal that the
condition $\calP(\abs{\wc\bvphi})\in L^\infty(\bbT)$ is necessary, in the sense that if there
exist bounded operators $\Phi_1,\Phi_2:\ell^2(\bbZ_+)\to L^2(0,\infty)$ such
that \eqref{average2} holds, then $\calP(\abs{\wc\bvphi})\in L^\infty(\bbT)$.

It will be convenient to separate the statement related to the boundedness of the maps
$\Phi_1$ and $\Phi_2$. 
%%%%%%%%%%%%%
\begin{lemma}\label{lma.d2}
	%%%%%%%%%%%%%
	Let $\bpsi\in L^2(\bbR)$ with $\calP(\abs{\wc\bpsi}^2) \in L^\infty(\bbT)$.
	Then the map 
	$$
	\Phi: x=\{x(j)\}_{j\geq0}\mapsto \sum_{j\geq0} x(j) \bpsi(t-j), \quad t\in\bbR,
	$$
	is bounded from $\ell^2(\bbZ_+)$ to $L^2(\bbR)$, with
	$\norm{\Phi}_\calB^2=\norm{\calP(\abs{\wc\bpsi}^2)}_{L^\infty(\bbT)}$. 
\end{lemma}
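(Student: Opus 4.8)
The plan is to compute $\norm{\Phi x}_{L^2(\bbR)}$ explicitly on the Fourier side and then read off both the upper bound and the exact value of $\norm{\Phi}_\calB$. Write $u(\xi)=\sum_{j\geq0}x(j)e^{2\pi ij\xi}$, so that $u$ is $1$-periodic with $\norm{u}_{L^2(\bbT)}=\norm{x}_{\ell^2(\bbZ_+)}$ and with vanishing negative Fourier coefficients. Since $\boldsymbol\calF^{-1}[\bpsi(\cdot-j)](\xi)=e^{2\pi ij\xi}\wc\bpsi(\xi)$, applying $\boldsymbol\calF^{-1}$ termwise gives $\wc{\Phi x}(\xi)=\wc\bpsi(\xi)\,u(\xi)$. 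By Plancherel, splitting $\bbR$ into unit intervals and using the $1$-periodicity of $u$ together with the definition of $\calP$, I would obtain
\[
\norm{\Phi x}_{L^2(\bbR)}^2=\int_0^1\abs{u(\xi)}^2\,\calP(\abs{\wc\bpsi}^2)(e^{2\pi i\xi})\,d\xi.
\]
The upper bound $\norm{\Phi x}_{L^2(\bbR)}^2\leq A\norm{x}_{\ell^2(\bbZ_+)}^2$ with $A=\norm{\calP(\abs{\wc\bpsi}^2)}_{L^\infty(\bbT)}$ is then immediate, since $\int_0^1\abs{u}^2=\norm{x}^2$; in particular $\Phi$ is bounded with $\norm{\Phi}_\calB^2\leq A$.

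The remaining task, and the main obstacle, is the matching lower bound $\norm{\Phi}_\calB^2\geq A$. Writing $g=\calP(\abs{\wc\bpsi}^2)$, the identity above reduces this to showing that $\sup_{x\neq0}\frac{\int_0^1\abs{u}^2 g\,d\xi}{\int_0^1\abs{u}^2\,d\xi}=\norm{g}_{L^\infty(\bbT)}$, where the supremum is over $u$ arising from $x\in\ell^2(\bbZ_+)$. The difficulty is that such $u$ are not arbitrary $L^2(\bbT)$ functions: they lie in the Hardy space $H^2$ (no negative Fourier modes), so I cannot simply take $\abs{u}^2$ to be an indicator concentrated where $g$ is near its essential supremum. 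To get around this I would use the Szeg\H{o} kernel as an approximate point mass inside $H^2$: for a point $\xi_0\in[0,1)$ and $0<r<1$, take $x_r(j)=r^je^{-2\pi ij\xi_0}$, so that $u_r(\xi)=(1-re^{2\pi i(\xi-\xi_0)})^{-1}$ and $\abs{u_r(\xi)}^2=(1-r^2)^{-1}P_r(\xi-\xi_0)$, where $P_r$ is the Poisson kernel, normalised so that $\int_0^1 P_r=1$.

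With this choice the quotient becomes $\int_0^1 P_r(\xi-\xi_0)\,g(\xi)\,d\xi$. Since $\{P_r(\cdot-\xi_0)\}_{r\to1}$ is an approximate identity, this quantity tends to $g(\xi_0)$ at every Lebesgue point $\xi_0$ of $g$. As $\norm{g}_{L^\infty(\bbT)}=A$ is the essential supremum, for every $\eps>0$ the set $\{g>A-\eps\}$ has positive measure and hence contains a Lebesgue point $\xi_0$ of $g$; letting $r\to1$ at this $\xi_0$ yields $\norm{\Phi}_\calB^2\geq g(\xi_0)>A-\eps$. Letting $\eps\to0$ gives $\norm{\Phi}_\calB^2\geq A$, which together with the upper bound completes the proof. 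The only points requiring care are the termwise application of $\boldsymbol\calF^{-1}$ (justified by $L^2(\bbR)$-convergence of the defining sum, using the upper bound already in hand) and the Lebesgue-point argument, both of which are routine.
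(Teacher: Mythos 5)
Your proposal is correct and follows essentially the same route as the paper: compute $\norm{\Phi x}^2_{L^2(\bbR)}$ by Plancherel, split $\bbR$ into unit intervals, and use the periodicity of $u=\wc x$ to arrive at the exact identity $\norm{\Phi x}^2_{L^2(\bbR)}=\int_0^1\abs{u(\xi)}^2\,\calP(\abs{\wc\bpsi}^2)(\xi)\,d\xi$, from which the upper bound is immediate. The one place you go beyond the paper is the lower bound $\norm{\Phi}_\calB^2\geq\norm{\calP(\abs{\wc\bpsi}^2)}_{L^\infty(\bbT)}$: the paper simply asserts the inequality is sharp, while you supply a genuine argument, correctly identifying the $H^2$ constraint on $u$ as the obstacle and resolving it with the Szeg\H{o}/Poisson kernel $x_r(j)=r^je^{-2\pi ij\xi_0}$ together with a Lebesgue-point argument. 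That argument is sound, though there is a shorter way around the $H^2$ constraint which likely explains the paper's ``clear'': for any trigonometric polynomial $v$, the modulation $e^{2\pi iN\xi}v(\xi)$ has only nonnegative Fourier modes for $N$ large while $\abs{e^{2\pi iN\xi}v}=\abs{v}$, so the supremum of $\int_0^1\abs{u}^2 g$ over unit vectors in the analytic class coincides with the supremum over all of $L^2(\bbT)$, which is $\norm{g}_{L^\infty(\bbT)}$ by density. Your handling of the termwise Fourier transform (finitely supported $x$ first, then density) also matches the paper's implicit extension step.
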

\begin{proof}
	Let $x$ be a finitely supported sequence. 
	We have, using Parseval's theorem,
	\begin{multline*}
	\Norm{\Phi x}^2_{L^2(\bbR)} =
	\Norm{\sum_{j\geq0} x(j) \bpsi(\cdot - j)}^2_{L^2(\bbR)}
	= 
	\int_\bbR \Abs{\sum_{j\geq0}x(j) \wc\bpsi(\xi)e^{2\pi ij\xi}}^2d\xi
	\\
	=
	\int_\bbR \abs{\wc x(e^{2\pi i\xi})}^2\abs{\wc\bpsi(\xi)}^2d\xi
	=
	\sum_{j\in\bbZ}\int_0^1\abs{\wc x(e^{2\pi i\xi})}^2\abs{\wc\bpsi(\xi-j)}^2 d\xi
	\\
	=
	\int_0^1\abs{\wc x(e^{2\pi i\xi})}^2 \calP(\abs{\wc\bpsi}^2)(\xi) d\xi
	\leq
	\norm{\calP(\abs{\wc\bpsi}^2)}_{L^\infty(\bbT)}\norm{\wc x}_{L^2(\bbT)}^2
	=
	\norm{\calP(\abs{\wc\bpsi}^2)}_{L^\infty(\bbT)}\norm{x}_{\ell^2(\bbZ_+)}^2.
	\end{multline*}
	It is also clear that the inequality here is sharp in the sense that
	$$
	\sup_{\|\wc x\|_{L^2(\bbT)}=1}
	\int_\bbR \abs{\wc x(e^{2\pi i\xi})}^2\abs{\wc\bpsi(\xi)}^2d\xi
	=
	\norm{\calP(\abs{\wc\bpsi}^2)}_{L^\infty(\bbT)}. 
	$$
	This proves the claim. 
\end{proof}

Below $\bbC_+$ will denote the upper half-plane; 
$H^\infty(\bbC_+)$, $H^2(\bbC_+)$ etc. are the standard Hardy classes.
We will sometimes identify functions in these Hardy classes with their boundary
values on $\bbR$.

\begin{proof}[Proof of Theorem~\ref{average}]
By assumption, we have $\calP(\abs{\wc\bvphi})\in L^\infty(\bbT)\subset L^1(\bbT)$; 
it follows that $\wc\bvphi\in L^1(\bbR)$. 
Recalling that $\supp \bvphi\subset[0,\infty)$, 
we obtain that $\wc\bvphi\in H^1(\bbC_+)$. 
Thus, we can factorise $\wc\bvphi$ into a product of two $H^2(\bbC_+)$-functions. 
More precisely, there exist
	$\bvphi_1,\bvphi_2\in L^2(0,\infty)$ such that 
	$$
	\wc\bvphi(\xi)=\wc\bvphi_1(\xi)\overline{\wc\bvphi_2(-\xi)}
	\quad \text{ and }\quad
	\abs{\wc\bvphi_1(\xi)}=\abs{\wc\bvphi_2(-\xi)}, 
	\quad  \forall \xi\in\bbR.
	$$ 
	Then $\bvphi = \bvphi_1\ast\overline{\bvphi_2}$ and
	$$
	\norm{\calP(\abs{\wc\bvphi_1}^2)}_{L^\infty(\bbT)}=
	\norm{\calP(\abs{\wc\bvphi_2}^2)}_{L^\infty(\bbT)}=
	\norm{\calP(\abs{\wc\bvphi})}_{L^\infty(\bbT)}.
	$$
	Next, for $i=1,2$, let us define the map $\Phi_i:\ell^2(\bbZ_+)\to L^2(0,\infty)$ by
	$$
	\Phi_i: x=\{x(j)\}_{j\ge 0}\mapsto \sum_{j\geq0} x(j)
	\bvphi_i(\cdot - j). 
	$$
	By Lemma~\ref{lma.d2}, both $\Phi_1$ and $\Phi_2$ are bounded
	with norms equal to $\sqrt{\norm{\calP(\abs{\wc\bvphi})}_{L^\infty(\bbT)}}$. 
	
	In order to prove \eqref{average2}, let us first rearrange the definition
	of $\calR_\bvphi\ba$. For each $j,k\ge 0$
	
	\begin{multline*}
	\calR_\bvphi\ba(j+k)=\int_0^\infty \ba(t)\bvphi(t-j-k)\,dt
	=
	\int_0^\infty \ba(t) \int_\bbR \bvphi_1(t-s-j-k)\overline{\bvphi_2(s)}\,dt\, ds
	\\
	=
	\int\int_{s+t>0} \ba(s+t)\bvphi_1(t-j)\overline{\bvphi_2(s-k)}\,dt\, ds.
	\end{multline*}
	Since both $\bvphi_1$ and $\bvphi_2$ are supported on $(0,\infty)$, we
	can rewrite this as
	$$
	\calR_\bvphi\ba(j+k)=
	\int_0^\infty \int_0^\infty \ba(s+t)\bvphi_1(t-j)\overline{\bvphi_2(s-k)}\,ds\, dt.
	$$

	Now for $x=\{x(j)\}_{j\ge 0}\in \ell^2(\bbZ_+)$, let us compute
	the quadratic form
	\begin{multline*}
	(\bH(\ba)\Phi_1 x,\Phi_2 x)
	=
	\sum_{j,k\ge 0}\int_0^\infty \int_0^\infty \ba(t+s) x(j) \overline{x(k)}
	\bvphi_1(t-j)\overline{\bvphi_2(s-k)}\,dt\,ds
	\\
	=
	\sum_{j,k\ge 0}\calR_\bvphi\ba(j+k)x(j) \overline{x(k)},
	\end{multline*}
	which yields \eqref{average2}.
\end{proof}

%%%%%%%%%%%%%%%%%%%%
\subsection{Unitary equivalence and restrictions associated to general convolutions}
%%%%%%%%%%%%%%%%%%%%

Let $L_n = L_n^{(0)}$ be the $n$-th Laguerre polynomial (see \cite[Ch. V]{Szego}
for the definition) and let
\[
\bu_n (t) = -2i\sqrt{\pi}L_n(4\pi t)e^{-2\pi t}, \quad t>0.
\label{laguerre}
\]
Then $\{\bu_n\}_{n\ge 0}$ is an orthonormal basis of
$L^2(0,\infty)$. It is well known that the matrix of an integral Hankel operator
is a Hankel matrix in the basis $\{\bu_n\}_{n\ge 0}$ and hence the classes
of Hankel matrices and integral Hankel operators are unitarily equivalent
\cite[Ch. 1, Thm 8.9]{Peller}. 

In this subsection we discuss how this unitary equivalence fits into our  ``restriction by averaging" framework. 
This requires looking at restrictions by averaging of a more
general type than considered above. 
To a given integral Hankel operator $\bH(\ba)$ we
associate the Hankel matrix $H(\alpha)$ with
$$
\alpha_j=\int_0^\infty \ba(t)\bvphi_j(t)dt,\quad j\geq0,
$$
where $\bvphi_j$ is a certain sequence of smooth functions, 
a more general one than just translations of a single function. 
Our sequence $\bvphi_j$ will be given by the multiple convolution of the form
$$
\bvphi_j=\bvphi*\underbrace{\nu*\nu*\cdots*\nu}_{\text{$j$ terms}},\quad j\geq0,
$$
where $\bvphi$ is a sufficiently regular function supported on $[0,\infty)$, 
and $\nu$ is a positive finite measure supported on $[0,\infty)$. 
Observe that if $d\nu(t)=\delta(t-1)dt$, then $\bvphi_j(t)=\bvphi(t-j)$, 
so we recover the definition of $\calR_\varphi$.

To make the multiple convolution notation more readable, we introduce the 
(formal) convolution with $\nu$ operator
$$
T_\nu\f=\f*\nu;
$$
then $\bvphi_j=T_\nu^j \bvphi$.

%%%%%%%%%%%%%%%%%%%
\begin{theorem}\label{convolution}
%%%%%%%%%%%%%%%%%%%
Let $\nu$ be a positive measure on $[0,\infty)$ with $\nu([0,\infty))\leq1$, 
and let $\bvphi\in C(\bbR)$ satisfy $\supp\bvphi\subset[0,\infty)$ and 
\[
\abs{\wc \bvphi(\xi)} \le \frac{C}{1+\xi^2}, \quad \xi\in\bbR.
\label{convolution1}
\]
For $j\geq0$, set $\bvphi_j=T_\nu^j \bvphi$ and consider the map
$$
\ba(t)\mapsto \alpha=\{\alpha(j)\}_{j=0}^\infty, 
\quad
\alpha(j)=\int_0^\infty \ba(t)\bvphi_j(t) dt. 
$$
Then there exist bounded operators $\Phi_1$ and $\Phi_2$ acting
from $\ell^2(\bbZ_+)$ to $L^2(0,\infty)$ such that
\[
\Phi_2^*\bH(\ba)\Phi_1=H(\alpha). 
\label{d2}
\]
Consequently, 
$$
\norm{H(\alpha)}\leq A\norm{\bH(\ba)}
\quad\text{ and }\quad
\norm{H(\alpha)}_{\Sch_p}\leq A\norm{\bH(\ba)}_{\Sch_p}
$$
for all $0<p<\infty$, where $A=\norm{\Phi_1}\norm{\Phi_2}$. 
\end{theorem}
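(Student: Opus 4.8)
The plan is to follow the strategy behind the proof of Theorem~\ref{average}, replacing the translates $\bvphi(\cdot-j)$ there by the convolution powers $\bvphi_j=T_\nu^j\bvphi$. Since $\abs{\wc\bvphi(\xi)}\le C/(1+\xi^2)$ we have $\wc\bvphi\in L^1(\bbR)$, and as $\supp\bvphi\subset[0,\infty)$ this gives $\wc\bvphi\in H^1(\bbC_+)$. I would therefore factorise $\wc\bvphi=\wc\bpsi_1\,\overline{\wc\bpsi_2(-\cdot)}$ with $\bpsi_1,\bpsi_2\in L^2(0,\infty)$ and $\abs{\wc\bpsi_1(\xi)}=\abs{\wc\bpsi_2(-\xi)}$, exactly as in the proof of Theorem~\ref{average}; in particular $\abs{\wc\bpsi_i(\xi)}^2=\abs{\wc\bvphi(\xi)}\le C/(1+\xi^2)$ for $i=1,2$. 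Writing $\wc\nu(\xi)=\int_0^\infty e^{2\pi i\xi t}\,d\nu(t)$, note that $\wc\nu$ is the boundary value of a function in $H^\infty(\bbC_+)$ with $\norm{\wc\nu}_\infty\le\nu([0,\infty))\le1$, so $\wc\nu$ maps $\bbC_+$ into $\bbD$. I would then define $\Phi_i\colon\ell^2(\bbZ_+)\to L^2(0,\infty)$ by $\Phi_i x=\sum_{j\ge0}x(j)\,(\bpsi_i*\nu^{*j})$; each summand lies in $L^2(0,\infty)$, and for finitely supported $x$ the transform factorises as $\wc{(\Phi_i x)}(\xi)=\wc\bpsi_i(\xi)\,\wc x(\wc\nu(\xi))$, where $\wc x(z)=\sum_{j\ge0}x(j)z^j$ is the (polynomial) symbol of $x$.

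The identity \eqref{d2} would be established by the quadratic-form computation from Theorem~\ref{average}. Its algebraic core is the convolution identity
$$
\bvphi_{j+k}=(\bpsi_1*\nu^{*j})*\overline{(\bpsi_2*\nu^{*k})},
$$
which follows on the Fourier side from $\wc\bvphi_{j+k}=\wc\bvphi\,\wc\nu^{\,j+k}$, the factorisation of $\wc\bvphi$, and the relation $\overline{\wc\nu(-\xi)}=\wc\nu(\xi)$, valid because $\nu$ is real. Granting this, for finitely supported $x$ one computes, writing the Hankel form as $(\bH(\ba)g,h)=\int_0^\infty\ba(u)(g*\overline h)(u)\,du$ for $g,h$ supported on $[0,\infty)$,
$$
(\bH(\ba)\Phi_1 x,\Phi_2 x)=\sum_{j,k\ge0}x(j)\overline{x(k)}\int_0^\infty\ba(u)\bvphi_{j+k}(u)\,du=(H(\alpha)x,x),
$$
which yields $\Phi_2^*\bH(\ba)\Phi_1=H(\alpha)$. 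The claimed norm bounds then follow from $\norm{\Phi_2^*\bH(\ba)\Phi_1}_{\Sch_p}\le\norm{\Phi_1}\,\norm{\Phi_2}\,\norm{\bH(\ba)}_{\Sch_p}$ and the analogous operator-norm inequality.

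The heart of the proof, and the main obstacle, is the boundedness of $\Phi_1$ and $\Phi_2$, since the translation structure underlying Lemma~\ref{lma.d2} is no longer present. From the transform identity one has, for finitely supported $x$ with symbol $f=\wc x$,
$$
\norm{\Phi_i x}_{L^2(\bbR)}^2=\int_\bbR\abs{\wc\bpsi_i(\xi)}^2\,\abs{f(\wc\nu(\xi))}^2\,d\xi.
$$
The naive pointwise estimate $\abs{f(w)}^2\le\norm{x}^2/(1-\abs{w}^2)$ is useless here, because $\abs{\wc\nu(\xi)}$ may approach $1$ (indeed $\wc\nu$ is unimodular when $\nu$ is a point mass). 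The resolution I would use exploits the Poisson-kernel form of the hypothesis: $\abs{\wc\bpsi_i(\xi)}^2\le C/(1+\xi^2)=C'P(\xi)$, where $P(\xi)=\tfrac1\pi(1+\xi^2)^{-1}$ is the Poisson kernel of $\bbC_+$ at the point $i$. Taking $f$ to be a polynomial, let $h$ be the harmonic extension to $\bbD$ of $\abs{f}^2\big|_{\bbT}$; then $h$ is continuous on $\overline{\bbD}$ and $\abs{f}^2\le h$ on $\bbD$ by the maximum principle. The composition $u:=h\circ\wc\nu$ is a bounded positive harmonic function on $\bbC_+$ whose nontangential boundary values equal $h(\wc\nu(\xi))$ a.e., so the Poisson representation gives $u(i)=\int_\bbR P(\xi)\,h(\wc\nu(\xi))\,d\xi$. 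Combining,
$$
\norm{\Phi_i x}^2\le C'\int_\bbR P(\xi)\,h(\wc\nu(\xi))\,d\xi=C'\,h(w_0),\qquad w_0:=\wc\nu(i)=\int_0^\infty e^{-2\pi t}\,d\nu(t).
$$
Since $h$ is the Poisson integral of $\abs{f}^2|_{\bbT}$, we have $h(w_0)\le\frac{1+\abs{w_0}}{1-\abs{w_0}}\norm{f}^2_{H^2}=\frac{1+\abs{w_0}}{1-\abs{w_0}}\norm{x}^2$, whence $\norm{\Phi_i}^2\le C'\frac{1+\abs{w_0}}{1-\abs{w_0}}$. Extending to all $x\in\ell^2(\bbZ_+)$ by density completes the proof, with $A=\norm{\Phi_1}\norm{\Phi_2}$.

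The one point requiring care is that $w_0=\int_0^\infty e^{-2\pi t}\,d\nu(t)<1$, which holds for every admissible $\nu$ except $\nu=\delta_0$; the latter is genuinely degenerate, since then $\bvphi_j\equiv\bvphi$ and $H(\alpha)$ need not be bounded, and should be excluded. More generally, if $w_0$ is close to $1$ one can replace the base point $i$ by $iy_0$ with $y_0$ large, at the cost of a $y_0$-dependent constant in the Poisson domination $\abs{\wc\bpsi_i(\xi)}^2\ls P_{y_0}(\xi)$; this keeps the bound finite as long as $\nu\neq\delta_0$. I expect the only delicate verification to be the identification of the nontangential boundary values of $u=h\circ\wc\nu$ with $h(\wc\nu(\xi))$, which is clean once $f$ is restricted to polynomials so that $h$ is continuous on $\overline{\bbD}$.
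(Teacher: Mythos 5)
Your proposal is correct, and it follows the paper's overall architecture --- factorise $\wc\bvphi$ into a product of two $H^2(\bbC_+)$ factors, set $\Phi_i x=\sum_{j\ge0}x(j)\,T_\nu^j\bvphi_i$, and verify \eqref{d2} by the same quadratic-form computation --- but it proves the crucial boundedness of $\Phi_1,\Phi_2$ by a genuinely different argument. The paper uses the asymmetric explicit factorisation $\wc\bvphi_1(\xi)=\wc\bvphi(\xi)(\xi+i)$, $\wc\bvphi_2(\xi)=-1/(\xi+i)$, and reduces boundedness to Lemma~\ref{lma.d3}, which is proved by conformally transferring $H^2(\bbC_+)$ to $H^2(\bbD)$ and citing Littlewood's subordination theorem for the composition operator with symbol $\psi=\wc\nu\circ(\text{Cayley map})$. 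You instead keep the symmetric factorisation from Theorem~\ref{average}, observe that \eqref{convolution1} says exactly that $\abs{\wc\bpsi_i}^2$ is dominated by the Poisson kernel of $\bbC_+$ at $i$, and run the harmonic-majorant argument directly in the half-plane: $\abs{f}^2\le h$ on $\overline\bbD$ with $h$ the Poisson extension of $\abs{f}^2|_\bbT$, $h\circ\wc\nu$ bounded harmonic on $\bbC_+$, hence $\norm{\Phi_i x}^2\ls h(\wc\nu(i))\le\frac{1+\abs{w_0}}{1-\abs{w_0}}\norm{x}^2$. This is in essence the standard proof of the general subordination theorem unpacked, so the two routes are morally equivalent; yours is self-contained and makes the constant explicit, $A\ls(1+w_0)/(1-w_0)$ with $w_0=\wc\nu(i)=\int_0^\infty e^{-2\pi t}\,d\nu(t)$ --- which is in fact the same constant hidden in the paper's appeal to Littlewood, namely $\bigl((1+\abs{\psi(0)})/(1-\abs{\psi(0)})\bigr)^{1/2}$ with $\psi(0)=\wc\nu(i)=w_0$ --- at the cost of invoking an inner--outer factorisation where the paper simply writes the factors down. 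Two minor repairs: for $i=2$ the correct identity is $\abs{\wc\bpsi_2(\xi)}^2=\abs{\wc\bvphi(-\xi)}$, harmless since the bound in \eqref{convolution1} is even in $\xi$; and since $\nu$ is a finite measure, $\wc\nu$ is continuous on the \emph{closed} half-plane, so $h\circ\wc\nu$ attains its boundary values $h(\wc\nu(\xi))$ continuously and the nontangential-limit discussion you flag as delicate can be bypassed entirely.

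Your remark about $\nu=\delta_0$ is correct and identifies an edge case the paper glosses over: then $w_0=1$, $\alpha(j)=\int_0^\infty\ba\bvphi\,dt$ is constant in $j$, and a nonzero constant Hankel matrix is unbounded, so the conclusion of the theorem genuinely fails; correspondingly, Lemma~\ref{lma.d3} as stated fails for the unimodular constant $\omega\equiv1$ (take $x(j)=(j+1)^{-1}$ for $j\le N$: $\norm{x}_{\ell^2}$ stays bounded while the image $\bigl(\sum_j x(j)\bigr)/(\xi+i)$ has $H^2$-norm of order $\log N$). For every other admissible $\nu$ one has $w_0<1$, and in the paper's proof $\psi$ is then a self-map of $\bbD$ with $\abs{\psi(0)}<1$, so both arguments go through; note that both constants blow up as $\nu\to\delta_0$, so your base-point refinement at $iy_0$ is optional rather than necessary.
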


It will again be convenient to separate the boundedness of $\Phi_1$ and $\Phi_2$
into a lemma.

%%%%%%%%%%%%%%%%%
\begin{lemma}\label{lma.d3}
%%%%%%%%%%%%%%%%%
Let $\omega\in H^\infty(\bbC_+)$ with $\norm{\omega}_{H^\infty}\leq 1$. 
Then the map 
\[
x=\{x(j)\}_{j=0}^\infty\mapsto \sum_{j\geq 0} \frac{x(j)\omega(\xi)^j}{\xi+i}, \quad \xi\in \bbC_+,
\label{d1}
\]
is bounded from $\ell^2(\bbZ_+)$ to $H^2(\bbC_+)$. 
\end{lemma}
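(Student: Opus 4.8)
The plan is to reduce the boundedness of the map \eqref{d1} to the boundedness of an operator on the unit disc via a conformal transfer, and there exploit the fact that $\omega$ is an inner-type contraction so that $\{\omega(\xi)^j\}_{j\ge0}$ behaves like an orthonormal-type system. First I would recall the standard unitary identification between $H^2(\bbC_+)$ and $H^2(\bbD)$ (equivalently, the Hardy space of the circle), say via the Cayley transform $\xi\mapsto z=(\xi-i)/(\xi+i)$. Under the associated unitary map $U:H^2(\bbD)\to H^2(\bbC_+)$, one has the well-known formula
\[
(Ug)(\xi)=\frac{1}{\sqrt{\pi}\,(\xi+i)}\, g\!\left(\frac{\xi-i}{\xi+i}\right),
\label{cayleymap}
\]
up to a fixed normalising constant. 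The crucial point is that the weight $1/(\xi+i)$ appearing in \eqref{d1} is precisely the factor produced by this transform, so the map \eqref{d1} is the image under $U$ of the much simpler map $x\mapsto \sum_{j\ge0} x(j)\,\theta(z)^j$ on the disc, where $\theta=U^{-1}\omega$ is the conformal pushforward of $\omega$.

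The second step is the key estimate. Since $\norm{\omega}_{H^\infty(\bbC_+)}\le1$, the transferred function $\theta$ is an $H^\infty(\bbD)$ contraction, $\norm{\theta}_{H^\infty(\bbD)}\le1$. I would then show that for a finitely supported sequence $x$,
\[
\Norm{\sum_{j\ge0} x(j)\,\theta(z)^j}_{H^2(\bbD)}^2
\le
\sum_{j\ge0}\abs{x(j)}^2
=
\norm{x}_{\ell^2(\bbZ_+)}^2.
\label{contraction}
\]
This is where the main work lies: \eqref{contraction} is exactly the statement that the analytic Toeplitz-type operator $T_\theta$ of multiplication by $\theta$ is a contraction on $H^2(\bbD)$, applied to the geometric-series structure. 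Concretely, writing $F(z)=\sum_j x(j)\theta(z)^j=(1-\theta(z)\,S)^{-1}x$ where $S$ is the shift, one recognises the sum as the action of the resolvent of $\theta(S)$; the contractivity $\norm{\theta(S)}\le1$ (von Neumann's inequality, since $S$ is a contraction and $\norm{\theta}_{H^\infty}\le1$) yields the bound. Alternatively, and more elementarily, one can observe that $z\mapsto\sum_j x(j)w^j$ evaluated along the inner substitution $w=\theta(z)$ is a composition with a contractive symbol, and apply the standard norm inequality for composition/multiplication by $H^\infty$ contractions.

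The main obstacle I anticipate is making \eqref{contraction} rigorous without circularity: one must be careful that the powers $\theta^j$ need not be orthogonal (they are only orthogonal when $\theta$ is inner), so a naive Parseval computation fails and genuine use of von Neumann's inequality or the contractivity of $T_\theta$ is required. Once \eqref{contraction} is established, combining it with the unitarity of $U$ gives
\[
\Norm{\sum_{j\ge0}\frac{x(j)\,\omega(\xi)^j}{\xi+i}}_{H^2(\bbC_+)}
\le
C\,\norm{x}_{\ell^2(\bbZ_+)},
\]
for an absolute constant $C$ absorbing the normalisation in \eqref{cayleymap}, which is exactly the asserted boundedness. The remaining details—verifying that $\theta=U^{-1}\omega$ genuinely lands in the unit ball of $H^\infty(\bbD)$ and that the formal geometric series converges in $H^2$ for finitely supported $x$—are routine and follow from the contraction bound itself by a density argument.
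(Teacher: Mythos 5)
Your opening move---transferring to the disc via the Cayley transform and observing that the weight $1/(\xi+i)$ is exactly the factor that makes the transfer unitary---is the same first step as the paper's proof (the paper uses $(Ug)(\zeta)=\frac{2\sqrt{\pi}}{1-\zeta}\,g\bigl(i\frac{1+\zeta}{1-\zeta}\bigr)$, under which $\omega(\xi)^j/(\xi+i)$ maps to a constant multiple of $\psi(\zeta)^j$ with $\psi(\zeta)=\omega\bigl(i\frac{1+\zeta}{1-\zeta}\bigr)$; note the symbol transfers by composition with the Cayley map, without the Jacobian factor, so ``$\theta=U^{-1}\omega$'' is a slight abuse). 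The genuine gap is your key estimate: the claimed norm-one bound $\Norm{\sum_{j\ge0}x(j)\theta^j}_{H^2(\bbD)}\le\norm{x}_{\ell^2(\bbZ_+)}$ is false. Writing $f(w)=\sum_{j\geq0}x(j)w^j$, the map in question is $f\mapsto f\circ\theta$, i.e.\ the composition operator $C_\theta$, whose norm on $H^2(\bbD)$ is at least $(1-\abs{\theta(0)}^2)^{-1/2}$; it is a contraction only when $\theta(0)=0$. Concretely, take $\theta\equiv c$ with $0<c<1$ and $x(j)=c^j$ for $j\le N$, $x(j)=0$ otherwise: then $\sum_j x(j)\theta^j$ is the constant $\sum_{j\le N}c^{2j}$, and the ratio of the left to the right side of your inequality tends to $(1-c^2)^{-1/2}$ as $N\to\infty$, unbounded as $c\to1$. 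Neither of your proposed justifications can rescue the constant-one claim: von Neumann's inequality bounds $\norm{\theta(T)}$ for a single contraction $T$, whereas your sum is $\bigl\langle (I-\theta(z)S^*)^{-1}x,\,e_0\bigr\rangle$ with $S^*$ the backward shift, and resolvents of contractions at an interior point $w$ only satisfy $\norm{(I-wS^*)^{-1}}\le(1-\abs{w})^{-1}$; and contractivity of multiplication by $\theta$ gives only the $\ell^1$ bound $\Norm{\sum_j x(j)\theta^j}\le\sum_j\abs{x(j)}$, since $\theta^j=T_\theta^j 1$.

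The correct completion is the alternative you mention only in passing, and it is precisely what the paper does: recognise the disc-side function as $f\circ\psi$ and invoke the Littlewood subordination theorem (the paper cites Shapiro), which gives $\norm{f\circ\psi}_{H^2(\bbD)}\le\bigl((1+\abs{\psi(0)})/(1-\abs{\psi(0)})\bigr)^{1/2}\norm{f}_{H^2(\bbD)}$. The constant necessarily depends on $\psi(0)=\omega(i)$, so the ``absolute constant $C$'' in your final display is also not attainable; indeed, for a unimodular constant $\omega$ the map \eqref{d1} is genuinely unbounded (take $\omega\equiv1$: the image is $\bigl(\sum_j x(j)\bigr)/(\xi+i)$), so any bound must blow up as $\abs{\omega(i)}\to1$. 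This dependence is harmless for the lemma, which asserts only boundedness for a fixed $\omega$, and Theorem~\ref{convolution} only uses $A=\norm{\Phi_1}\norm{\Phi_2}$ without uniformity in $\nu$. With your contraction claim replaced by the subordination bound, your argument coincides with the paper's proof.
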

\begin{proof}
Consider the conformal map 
$$
\bbD\ni\zeta\mapsto \xi=i\frac{1+\zeta}{1-\zeta}\in \bbC_+
$$
and the corresponding unitary operator $U:H^2(\bbC_+)\to H^2(\bbD)$, 
$$
(U\f)(\zeta)=\frac{2\sqrt{\pi}}{1-\zeta}\f\left(i\frac{1+\zeta}{1-\zeta}\right). 
$$
We have
$$
U: \frac{\omega(\xi)^j}{\xi+i}\mapsto -i\sqrt{\pi}\psi(\zeta)^j, 
\quad 
\psi(\zeta)=\omega\left(i\frac{1+\zeta}{1-\zeta}\right). 
$$
It follows that $U$ maps the right hand side of \eqref{d1} to the function
$$
-i\sqrt{\pi}\sum_{j\geq0} x(j)\psi(\zeta)^j. 
$$
Since $\abs{\psi(\zeta)}\leq1$, by the Littlewood subordination theorem
\cite[Chap. 1.3]{Shapiro}, we have
$$
\Norm{\sum_{j\geq0}x(j) \psi(\zeta)^j }_{H^2(\bbD)}
\leq
C \Norm{\sum_{j\geq0}x(j) \zeta^j}_{H^2(\bbD)}
=
C \norm{x}_{\ell^2}. 
$$
Putting this together, we obtain the required statement. 
\end{proof}

\begin{proof}[Proof of Theorem~\ref{convolution}]
Let us write
$$
\wc\bvphi(\xi)=\wc\bvphi_1(\xi)\overline{\wc\bvphi_2(-\xi)}, 
\quad
\wc\bvphi_1(\xi)=\wc\bvphi(\xi)(\xi+i), 
\quad
\wc\bvphi_2(\xi)=-\frac{1}{\xi+i}, 
$$
so that $\bvphi=\bvphi_1*\overline{\bvphi_2}$.
By \eqref{convolution1} combined with the condition on the support of $\bvphi$, we have 
$\wc\bvphi_1,\wc\bvphi_2\in H^2(\bbC_+)$ and so 
$\bvphi_1,\bvphi_2\in L^2(0,\infty)$.

For $i=1,2$, let $\Phi_i:\ell^2(\bbZ_+)\to L^2(0,\infty)$ be the map
$$
\Phi_i: x=\{x(j)\}_{j\ge 0}\mapsto \sum_{j\geq0} x(j) T_\nu^j\bvphi_i.
$$
Observe that 
$$
\wc{T_\nu^j\bvphi_i}(\xi)=\wc\nu(\xi)^j \wc\bvphi_i(\xi).
$$
Further, since by hypothesis 
$\nu([0,\infty))\leq1$, we have that the inverse Fourier transform $\wc\nu$ 
is in $H^\infty(\bbC_+)$ with $\norm{\wc\nu}_{H^\infty(\bbC_+)}\leq1$.

Let us first show that $\Phi_2$ is bounded.
By applying the inverse Fourier transform, it 
suffices to check that the map 
$$
x=\{x(j)\}_{j\geq0}\mapsto \left(\sum_{j\geq0} x(j)
\wc\nu(\xi)^j \right)\wc\bvphi_2(\xi)
$$
is bounded from $\ell^2$ to $H^2(\bbC_+)$. 
Recalling the definition of $\wc\bvphi_2$, we see that this immediately follows from Lemma~\ref{lma.d3}. 

To prove that $\Phi_1$ is bounded, we write
$$
\wc\bvphi_1(\xi)=\frac{\bh(\xi)}{\xi+i}, \quad \bh(\xi)=\wc\bvphi(\xi)(\xi+i)^2. 
$$
By \eqref{convolution1}, we have $\bh\in H^\infty(\bbC_+)$, and so the boundedness of $\Phi_1$ again follows by 
an application of Lemma~\ref{lma.d3}.

It remains to check formula \eqref{d2}. 
This is the same argument as the one in the proof of Theorem~\ref{average}. 
Indeed, we have
$$
T_\nu^{j+k}\bvphi
=
T_\nu^{j+k}(\bvphi_1*\overline{\bvphi_2})
=
(T_\nu^ j\bvphi_1)*(\overline{T_\nu^k\bvphi_2}),
$$
and therefore
$$
\alpha(j+k)=\int_0^\infty \ba(t) \int_\bbR (T_\nu^ j\bvphi_1)(t-s)(\overline{T_\nu^k\bvphi_2})(s)\,ds\, dt.
$$
Since $\supp T_\nu^j \bvphi_i\subset [0,\infty)$, by a change of variable
this can be rewritten as
$$
\alpha(j+k)=\int_0^\infty \int_0^\infty \ba(t+s)(T_\nu^ j\bvphi_1)(t)(\overline{T_\nu^k\bvphi_2})(s)\,ds\, dt.
$$
Now we see that 
\begin{align*}
(\bH(\ba)\Phi_1x,\Phi_2x)
& =
\sum_{j,k\geq0}x(j)\overline{x(k)}\int_0^\infty \int_0^\infty 
\ba(t+s)(T_\nu^ j\bvphi_1)(t)(\overline{T_\nu^k\bvphi_2})(s)\,ds\, dt \\
& =
(H(\alpha)x,x).
\end{align*}

\end{proof}

\begin{example}
For $t\ge 0$, let 
$$
\bvphi(t) = -4\pi te^{-2\pi t} \quad \text{and} \quad
\nu(t) = \delta(t) - 4\pi e^{-2\pi t}.
$$
Then 
$$
\wc\bvphi(\xi) = \frac{1}{\pi(\xi+i)^2} \quad \text{and} \quad
\wc\nu(\xi)=\frac{\xi - i}{\xi + i} \in L^\infty(\bbR).
$$
Hence the conclusions of Theorem \ref{convolution} hold. However, we can
say more in this case. We also have that $\bvphi=\bpsi*\bpsi$, with
$\bpsi(t) =-2i\sqrt{\pi}e^{-2\pi t}$, $t\ge 0$, and so we can take
$$
\Phi_1 x=\Phi_2 x= \sum_{j\ge 0} x(j) T_\nu^j\bpsi
$$
in the proof of Theorem \ref{convolution}. It can be shown that
$T_\nu^j\bpsi = \bu_j$, $j\ge 0$, where $\{\bu_j\}_{j\ge 0}$ is the
orthonormal basis given by \eqref{laguerre}. Thus $\Phi_1$ (and hence
$\Phi_2$) is unitary. Consequently, this choice of $\bvphi$ and $\nu$
produces the well-known unitary equivalence between Hankel matrices
and integral Hankel operators.
\end{example}

\end{document}